\newenvironment{rezabib}
  {\bibdiv\biblist\setupbib}
  {\endbiblist\endbibdiv}
  \def\setupbib{\catcode`@=\active}
\def\gatherkey#1#2{\gatherkeyaux{#1}#2\gatherkeyaux}
\def\gatherkeyaux#1#2,#3\gatherkeyaux{\bib{#2}{#1}{#3}}
\newtheorem{theorem}{Theorem}[section]
\newtheorem{proposition}[theorem]{Proposition}
\newtheorem{lemma}[theorem]{Lemma}
\newtheorem{corollary}[theorem]{Corollary}
\newtheorem{remarks}[theorem]{Remarks}
\numberwithin{equation}{section}
\newcommand{\nn}{\mathbb{N}}
\newcommand{\zz}{\mathbb{Z}}
\newcommand{\cc}{\mathbb{C}}
\newcommand{\qqq}{\mathbb{Q}}
\def\bals#1\nals{\begin{align*}#1\end{align*}}
\def\bal#1\nal{\begin{align}#1\end{align}}
\def \ds{\displaystyle}
\DeclareMathOperator{\SL}{SL}
\begin{document}

\begin{center}

%%\title{Modular forms in Eisenstein space}
%\title{Bounds for orders of zeros of a class of Eisenstein series and their applications on pairs of eta-quotients with dual weights}

\title[Bounds for orders of zeros of a class of Eisenstein series]{Bounds for orders of zeros of a class of Eisenstein series and their applications on dual pairs of eta quotients}

\author{Amir Akbary}
\address{Department of Mathematics and Computer Science, University of Lethbridge, Lethbridge, Alberta T1K 3M4, Canada}
\email{amir.akbary@uleth.ca}

\author{Zafer Selcuk Aygin}
\address{Science Department, Northwestern Polytechnic, Grande Prairie, AB T8V 4C4, Canada}%
\email{selcukaygin@gmail.com}%
\subjclass[2010]{11F11, 11F20, 11F27} 
\keywords{eta quotients; Eisenstein series; modular forms; differential identities}

\thanks{Research of the first author is partially supported by NSERC. Research of the second author is partially supported by a PIMS postdoctoral fellowship.}

\begin{abstract}
%\today\\
Let $k$ be an even positive integer, $p$ be a prime and $m$ be a nonnegative integer. We find an upper bound for orders of zeros (at cusps) of a linear combination of classical Eisenstein series of weight $k$ and level $p^m$. As an immediate consequence we find the set of all eta quotients that are linear combinations of these Eisenstein series and hence the set of all eta quotients of level $p^m$ whose derivatives are also eta quotients. 

%Let $E_k(z)$ denote the Eisenstein series of weight $k$ and level $1$.
%Let $p$ be a prime and $m$ be a non-negative integer. In the vector space generated by 
%$\left \{E_k(dz);~d\mid p^m  \right \}$ over $\mathbb{C}$, we determine all the primitive modular forms of integer weight $k$ and level $p^m$ with rational Fourier coefficients  that are not vanishing on the upper half-plane.
\end{abstract}
\maketitle
\end{center}

\section{INTRODUCTION}\label{S1}
%For an even positive integer $k$ and positive integer $N$, let $M_k(\Gamma_0(N))$ (respectively $S_k(\Gamma_0(N))$) be the space of modular forms (respectively cusp forms) of weight $k$ and level $N$. It is known that there is a decomposition 
%$$M_k(\Gamma_0(N))=E_k(\Gamma_0(N)) \oplus S_K(\Gamma_0(N),$$
%where $E_k(\Gamma_0(N))$ is called the \emph{Eisenstein subspace} of $M_k(\Gamma_0(N))$ (see \cite[p. 83]{stein}). We refer to an element of  $E_k(\Gamma_0(N)$
%as an \emph{Eisenstein series}.
For  an element $z$ in the upper half-plane of the complex numbers, let $q:=e^{2\pi {\rm i}  z}$. The classical Eisenstein series are defined by
\bal
E_k(z):= \frac{-B_{k}}{2k} + \sum_{n \geq 1} \sigma_{k-1}(n) q^{n }, \label{exp6}
\nal
where $k\geq 2$ is an even integer, $B_k$ is the $k$-th Bernoulli number and $\sigma_{k-1}(n) = \sum_{0<d \mid n} d^{k-1}$. (Here we use the normalization given in \cite[p. 88]{stein} for the Eisenstein series $E_k(z)$.)          %It is known that, for even $k>2$ and $d\mid N$, we have  $E_k(dz) \in E_k(\Gamma_0(N))$. Moreover, for $1< d\mid N$, we have $E_2(z)-dE_2(dz)\in E_2(\Gamma_0(N))$.
%It is known that 
%\bals
%\{ E_2(dz; \epsilon, \psi);~\epsilon \cdot \psi=\chi_1,~(\epsilon,\psi) \neq (\chi_1,\chi_1) ~{\rm and}~ LMd\mid N\} 
%\cup \{ E_2(z; \chi_1, \chi_1)-dE_2(dz; \chi_1, \chi_1);~ 1< d\mid N\}
%\nals
%forms a basis for the ($\mathbb{Q}$ or $\mathbb{C}$ ???) space $E_2(\Gamma_0(N), \chi_1)$. 
Let 
%$\mathcal{E}_2(N)$ be the subspace of $E_2(\Gamma_0(N))$  generated by 
\bals
\mathcal{E}_k(N) := \begin{cases}
\langle E_2(z)-dE_2(dz);~ 1< d\mid N \rangle_{\qqq} & \mbox{if $k=2$},\\
\langle E_k(dz);~ 1\leq d\mid N \rangle_{\qqq} & \mbox{otherwise.}
\end{cases}
\nals
Let $M_k(\Gamma_0(N))$ be the space of modular forms of weight $k$ for $\Gamma_0(N)$. Then it is known that for all even $k\geq 2$ the space $\mathcal{E}_k(N)$ is a subset of $M_k(\Gamma_0(N))$. %Moreover $\mathcal{E}_k(N)$ is a subspace of space of holomorphic Eisenstein series of weight $2$ and level $N$ denoted by $E_k(\Gamma_0(N))$.

Some infinite products that can be expressed explicitly as infinite sums are elements of $\mathcal{E}_k(N)$. For example we have
\bals
\prod_{n=1}^{\infty} \frac{(1-q^{2n})^{20}}{(1-q^{n})^{8}(1-q^{4n})^{8}}= 8E_2(z) -32E_2(4z),
\nals

%\bals
\begin{multline*}
\prod_{n=1}^{\infty} \frac{(1-q^{2n})^{2}(1-q^{4n})^{4}(1-q^{6n})^{6}}{(1-q^{n})^{2}(1-q^{3n})^{2}(1-q^{12n})^{4}}\\= 2E_2(z)-3E_2(2z)+4E_2(4z)+9E_2(6z)-36E_2(12z),
\end{multline*}
%\nals
and 
%the Eisenstein component of
%\bals
\begin{multline*}
\left(\prod_{n=1}^{\infty} \frac{(1-q^{2n})^{5}}{(1-q^{n})^{2}(1-q^{4n})^{2}}\right)^{4k}\\=
%& 
\frac{-2k}{B_{2k}}\left(\frac{(-1)^k}{2^{2k}-1} E_{2k}(z) - \frac{((-1)^k+1)}{2^{2k}-1} E_{2k}(2z) + \frac{2^{2k}}{2^{2k}-1} E_{2k}(4z) \right)\\
%& \qquad 
+ \sum_{n=1}^{\infty} O(n^{k}) q^{n},
\end{multline*}
%\nals
where the first identity can be deduced from Jacobi's formula for the representation by four squares \cite{jacobi}, the second identity is from Williams's paper \cite[Table 1, No. 24]{williams-2}, and the last one is given by Ramanujan \cite[Sec. 25]{ramanujan} and proven by Mordell \cite{mordell}.

%see \cite{jacobi}, \cite{williams-2}, and \cite{ramanujan}, respectively, regarding these identities.
By using \cite[Corollary 2.1]{AT19} the first two indentities above induce the differential identities
\bals
D \left(q\prod_{n=1}^{\infty}\frac{(1-q^{4n})^{8}}{(1-q^{n})^{8}}\right) = q \prod_{n=1}^{\infty}\frac{(1-q^{2n})^{20}}{(1-q^{n})^{16}}
\nals
and
%\bals
\begin{multline*}
D \left(q^2 \prod_{n=1}^{\infty} \frac{(1-q^{2n})^3 (1-q^{12n})^6 }{(1-q^{n})^4 (1-q^{4n})^2 (1-q^{6n})^3} \right) 
\\= 2 q^2 \prod_{n=1}^{\infty} \frac{(1-q^{2n})^5 (1-q^{4n})^2 (1-q^{6n})^3 (1-q^{12n})^2 }{(1-q^{n})^6 (1-q^{3n})^2}.
\end{multline*}
%\nals
Here, $D:=q\frac{d}{dq}$ denotes the Ramanujan differential operator.
Additionally, the differential identity
\bals
E_2(z)^2&= \frac{5}{12} E_4(z) -\frac{1}{2} D( E_2(z))
\nals
appears in the works of Besge, Glaisher and Ramanujan independently (see \cite{williams} for the references). In all these examples, one can replace $z$ by $tz$ ($t \in \nn$) and obtain another product-to-sum formula. To avoid this triviality; and to avoid counting the same example more than once we define the set
\bals
\mathcal{P}_k(N) := \mathcal{E}_k(N) \backslash \mathcal{O}_k(N),
\nals 
where
\bals
\mathcal{O}_k(N) := \bigcup_{1<d \mid N} \left( \mathcal{E}_k(N/d) \cup \{ f(d z); f(z) \in \mathcal{E}_k(N/d) \}\right).
\nals
%The space $\mathcal{O}_k(d,N)$ is similar to the space of \emph{oldforms}. However, we do not have the concept of orthogonality between the elements in $\mathcal{E}_k(N)$, hence the concept of newforms (or primitive forms) does not work here. 

Let $R(N)$ denote a complete set of inequivalent cusps of $\Gamma_0(N)$ and for $r \in R(N)$, let $v_r(f)$ denote the order of vanishing of $f$ at the cusp $r$. Letting $f(z) \in \mathcal{P}_k(p^m)$, in this paper we find the following upper bound for the sum of orders of vanishings of $f(z)$ at cusps in $R(p^m)$. 
\begin{theorem}
\label{maingen}
Let $p$ be a prime and $m \in \nn_0 =\mathbb{N} \cup \{0\}$ and $k\geq 2$ be even. If $f(z) \in \mathcal{P}_k(p^m) $, then we have 
\bals
\sum_{r \in R(p^m)} v_r(f) <\begin{cases} 
1 & \mbox{if $p^m=1$,}\\
4 & \mbox{if $p^m=4$,}\\
p^{[(m-1)/2]} ( p^{(m-1)-2 [(m-1)/2]} +1) & \mbox{if $p^m \neq 1\mbox{ or } 4$}.
\end{cases}
\nals
\end{theorem}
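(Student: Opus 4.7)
The plan is to expand any $f\in\mathcal{P}_k(p^m)$ in the natural basis of $\mathcal{E}_k(p^m)$, namely $f=\sum_{j=0}^{m} c_j\,E_k(p^j z)$ when $k\neq 2$, and $f=\sum_{j=1}^{m} c_j\bigl(E_2(z)-p^j E_2(p^j z)\bigr)$ when $k=2$. Unwinding the definition of $\mathcal{O}_k(p^m)$ shows that $f\in\mathcal{P}_k(p^m)$ is equivalent to having both endpoint coefficients $c_0$ and $c_m$ nonzero (with the analogous statement in the $k=2$ case), so the task reduces to bounding $\sum_r v_r(f)$ under this nondegeneracy hypothesis.

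The core step is to work out the local Fourier expansion of each $E_k(p^j z)$ at every cusp of $\Gamma_0(p^m)$. The cusps are parametrized by $a/p^i$ with $0\le i\le m$ and $\gcd(a,p)=1$, giving $\phi(p^{\min(i,m-i)})$ orbits of ``type $p^i$'' and widths $p^{\max(0,m-2i)}$. For a representative $\gamma=\begin{pmatrix}a & b\\ p^i & d\end{pmatrix}\in\SL_2(\zz)$, a Bruhat-style decomposition of the matrix $\begin{pmatrix}p^j a & p^j b\\ p^i & d\end{pmatrix}$ (determinant $p^j$), combined with the $\SL_2(\zz)$-modularity of $E_k$, yields an identity of the form $E_k(p^j z)\big|_k\gamma = p^{-k\alpha(i,j)}\,E_k(\lambda_{i,j}z+\mu_{i,j,d})$, with computable nonnegative integers $\alpha(i,j),\lambda_{i,j}$ and a rational shift $\mu_{i,j,d}$ involving $d/p^{\min(i,j)}$. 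A key by-product is that every $E_k(p^j z)$ has a nonzero constant Fourier coefficient at every cusp of $\Gamma_0(p^m)$, so all vanishing of $f$ at cusps arises purely from cancellation among the $c_j$'s.

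Using these expansions, the condition $v_{a/p^i}(f)\ge \ell$ translates into $\ell$ explicit linear equations in $(c_0,\ldots,c_m)$, whose coefficients are products of $\sigma_{k-1}$-values, negative powers of $p^k$, and $p^{\min(i,m-i)}$-th roots of unity. Assuming $\sum_r v_r(f)\ge B$, these conditions assemble into a block linear system (blocks indexed by type $i$, rows inside a block indexed by $a\pmod{p^{\min(i,m-i)}}$) of Vandermonde type. An explicit rank count will show that once $B$ reaches the claimed threshold $p^{[(m-1)/2]}\bigl(p^{(m-1)-2[(m-1)/2]}+1\bigr)$, the compatibility of the system forces either $c_0=0$ or $c_m=0$, contradicting $f\in\mathcal{P}_k(p^m)$; this yields the desired bound by contradiction.

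The main obstacle will be the careful bookkeeping at the intermediate cusps ($0<i<m$), where the interplay among the automorphy factor $p^{-k\alpha(i,j)}$, the ``period'' $\lambda_{i,j}$ (controlling which Fourier modes of $E_k$ contribute), and the root-of-unity shifts $\mu_{i,j,d}$ determines exactly when the Vandermonde block degenerates; this is where the piecewise odd/even structure of the bound enters. The two degenerate cases $p^m=1$ (immediate since $v_\infty(E_k)=0$) and $p^m=4$ (where $\mathcal{E}_2(4)$ is one dimension smaller than $\mathcal{E}_k(4)$ for $k\ne 2$, forcing the weaker bound $<4$) are then dispatched separately by direct inspection.
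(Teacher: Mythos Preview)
Your setup matches the paper's: write $f=\sum_{j=0}^m r_{p^j}E_k(p^jz)$, observe that $f\in\mathcal{P}_k(p^m)$ forces $r_1\neq 0$ and $r_{p^m}\neq 0$, and compute the local Fourier expansion of each $E_k(p^jz)$ at every cusp $a/p^i$ via the matrix decomposition you describe. But the conclusion is reached much more cheaply than through a global Vandermonde rank count. The point is that at a fixed cusp $a/p^i$ the exponent of $q_{p^i,p^m}$ contributed by $E_k(p^jz)$ is $np^{m-j}$ (or $np^{m+j-2i}$) when $i<m/2$, and $np^{j}$ (or $np^{2i-j}$) when $i>m/2$; in either case the \emph{unique} term with exponent $1$ comes from $n=1$ and $j=m$ (resp.\ $j=0$). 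Hence the coefficient of $q^1$ in the local expansion of $f$ is a nonzero multiple of $r_{p^m}$ alone (resp.\ $r_1$ alone), and $v_{a/p^i}(f)>1$ already forces $r_{p^m}=0$ or $r_1=0$. When $m$ is even and $i=m/2$, both $j=0$ and $j=m$ contribute to $q^1$, but the $j=m$ term carries the factor $e^{-2\pi i\,df/p^{m/2}}$, which for $p^m\neq 4$ is non-real while $r_1,r_{p^m}\in\qqq$, so again $r_1=r_{p^m}=0$. Thus for $p^m\neq 4$ one gets $v_r(f)\le 1$ at \emph{every} cusp individually; since $f$ is not a cusp form, $v_r(f)=0$ somewhere, and the bound $\sum_r v_r(f)<\#R(p^m)$ follows. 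No system across cusps is needed, and your ``Vandermonde'' step is both unsubstantiated and unnecessary.

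Your explanation of the exceptional case $p^m=4$ is also off: it has nothing to do with $\dim\mathcal{E}_2(4)$ versus $\dim\mathcal{E}_k(4)$. The issue is that at the middle cusp $1/2$ the root of unity $e^{-2\pi i\,df/2}=-1$ is rational, so the $q^1$-coefficient equation $r_1 - 2^{-k}r_4=0$ can hold with $r_1,r_4\neq 0$. The paper handles this by writing down the three linear equations coming from $v_{1/2}(f)>2$ (coefficients of $q^0,q^1,q^2$) and checking directly that they force $r_1=r_2=r_4=0$; together with $v_{1/1},v_{1/4}\le 1$ and the non-cuspidality of $f$ this gives $\sum_r v_r(f)<4$. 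The case $p^m=1$ is, as you say, immediate.
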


\begin{remarks}
\begin{enumerate}
\item For $p^m \neq 4$, the proof of Theorem \ref{maingen} more generally establishes that if $f(z) \in \mathcal{P}_k(p^m) $, then $v_r(f) \leq 1$ at any cusp $r\in R(p^m)$. Note that 
$$\#R(p^m) = p^{[(m-1)/2]} (p^{(m-1)-2[(m-1)/2]}+1),$$
see \cite[Corollary 6.3.24.(b)]{CS17}.
\item The bounds given by Theorem \ref{maingen} do not depend on the weight $k$. Therefore, by valence formula, as the weight increases, the proportion of the zeros at the cusps compared to all the zeros decreases.
\item If we let the field of coefficients in the definition of $\mathcal{E}_k(N)$ to be $\cc$, then Theorem \ref{maingen} holds for
\bals
f(z)=\sum_{t \mid p^m} r_t E_k(tz) \in \mathcal{P}_k(p^m),
\nals
unless $m$ is even and  $ \omega \frac{r_{p^m}}{r_1} \neq - p^{mk/2}$ where $\omega$ is a certain $p^{m/2}$th root of unity defined in the proof of Lemma \ref{exp10}. In this case the bound on $\sum_{r \in R(p^m)} v_r(f)$ can be bigger because one of the arguments in the proof of Lemma \ref{exp10} may fail in certain cases. %,ccc and the upper bound for vanishings at that cusp is less than or equal to $2$, that is, we have $v_{a/p^{m/2}}(f) \leq 2$. 
\end{enumerate}
\end{remarks}

We next describe an application of Theorem \ref{maingen}. The Dedekind eta function is defined by the infinite product 
$$\eta(z)=e^{\pi {\rm i} z/12} \prod_{n=1}^{\infty} (1-e^{2\pi {\rm i}   nz})= q^{1/24} \prod_{n=1}^{\infty} (1-q^n).$$
An \emph{eta quotient of level} $N$ is defined to be of the form
\begin{equation}
\label{eta}
f(z)=\prod_{t\mid N} \eta(tz)^{r_t},
\end{equation}
where the exponent $r_t$ are integers. The weight attached to this eta quotient is $k= \sum_{0<t\mid N} {r_t}/2$. Notice that we do not require level to be the lowest common multiple of $t$ for which $r_t \neq 0$, therefore the level of an eta quotient in our approach is not unique. This gives us a certain freedom in our discussions and does not affect the completeness of our results. %because we establish it through Eisenstein series in $\mathcal{P}_k(N)$. 

We say an eta quotient $f$ is \emph{primitive} if there is no eta quotient $g$ such that $f(z)=g(dz)$ for some integer $d>1$. A pair $(f, g)$ of eta quotients is  called a \emph{dual pair} if $f$ is of weight $-k$ and $g$ is of weight $k+2$, for some non-negative integers $k$, and the ($k+1$)-th derivative of $f$ is a non-zero constant multiple of $g$. In \cite{choi} the problem of finding all dual pairs of eta quotients $(f, g)$ on $\Gamma_0(N)$ for which $f$ is a primitive eta quotients is studied. Theorem \ref{maingen}
%This result 
has an immediate application on finding dual pairs of eta quotients $(f, g)$ of weight $(0, 2)$. These are
the eta quotients whose derivatives are also eta quotients (or constant multiple of eta quotients). %(with respect to the Ramanujan differential operator $q \frac{d}{dq}$). 
In \cite{choi} the set of all such primitive eta quotients on $\Gamma_0(N)$ with squarefree levels $N$ is given; in \cite{AT19} a set of $203$ dual pairs of weight $(0, 2)$ was given and conjectured to be the complete set for all $N$. Additionally in \cite{AT19} it is established that every such pair is induced by the eta quotients in $\mathcal{E}_2(N)$. Since eta quotients have all their zeros (or poles) at cusps, as a direct consequence of our Theorem \ref{maingen} we establish the complete set of eta quotients of prime power levels whose derivatives are also eta quotients. This extends the results of \cite{choi} on dual pairs of weight $(0, 2)$ for squarefree levels to prime power levels.

%{\color{red} We use Eisenstein series as a tool to approach this question because of the following. 
 %Let $N \in \nn$ and $d \mid N$. If $f(dz)=\sum_{n\geq 0} a_n q^{dn} \in \mathcal{E}_k(N)$, then $f(z)=\sum_{n\geq 0} a_n q^{n} \in \mathcal{E}_k(N/d)$. However, a similar statement for eta quotients work only when $N$ is squarefree, see \cite[Lemma 2.12]{choi} (for example $ \eta^{8}(4z)/\eta^{4}(2z) \in \mathcal{E}_2(4)$ but $ \eta^{8}(2z)/\eta^{4}(z) \not\in \mathcal{E}_2(2)$).}
%Our proof exploits this feature, and uses Corollary 2.1 from \cite{AT19} to bridge eta quotients with their duals using Eisenstein series.

As noted all zeros (or poles) of eta quotients are at the cusps. Hence, by the valence formula, for an eta quotient $f(z)$ of level $p^m$ we have $\sum_{r \in R(p^m)} v_r(f)= \frac{k}{12} ( p^{m} + p^{m-1})$ (see \cite[Lemma 2.1]{choi}). Therefore a comparison with the upper bound given by Theorem \ref{maingen} and investigations among possible pairs of $(k,p^m)$ give us the following statement.   
\begin{theorem}
\label{main}
Let $p$ be prime, $m \in \nn_0$ and $k \in \nn$ be even. Then there is no eta quotient in $\mathcal{P}_k(p^m)$ unless $(k,p^m)=(2,4)$, $(2,8)$, $(2,9)$, $(2,16)$, $(4,2)$ or $(4,4)$. 
\end{theorem}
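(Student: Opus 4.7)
The plan is to combine Theorem \ref{maingen} with the valence formula. An eta quotient $f$ of level $p^m$ has no interior zeros, and if $f\in\mathcal{P}_k(p^m)\subset M_k(\Gamma_0(p^m))$, then the valence formula gives
\[
\sum_{r\in R(p^m)} v_r(f) \;=\; \frac{k(p^m+p^{m-1})}{12}\qquad(m\geq 1),
\]
and $k/12$ when $m=0$. Theorem \ref{maingen} then forces this expression to be strictly less than its stated bound $B(p,m)$. Unpacking $B(p,m)$ by the parity of $m$ reduces the inequality to $k(p+1)p^{(m-1)/2}<24$ in the odd case, $kp^{m/2}<12$ in the even case with $p^m\neq 4$, and $k<8$ when $p^m=4$. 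A short enumeration yields the complete list of candidate pairs $(k, p^m)$: the six pairs in the conclusion together with roughly ten others, namely the level-$1$ cases $(k, 1)$ with even $k\leq 10$ and the positive-level pairs $(2,2),(2,3),(2,5),(2,7),(2,25),(4,3),(6,2),(6,4)$.

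The second step is to discard each extra candidate. The level-$1$ entries are immediate, since the smallest-weight eta quotient on $\mathrm{SL}_2(\zz)$ is $\eta(z)^{24}$, which has weight $12$. For the positive-level extras, the cleanest first filter is Ligozat's necessary congruences for $\prod_{\delta\mid p^m}\eta(\delta z)^{r_\delta}$ to be modular on $\Gamma_0(p^m)$ with trivial character, namely
\[
\sum_{\delta\mid p^m}\delta\,r_\delta\equiv 0\pmod{24}\quad\text{and}\quad\sum_{\delta\mid p^m}\frac{p^m}{\delta}\,r_\delta\equiv 0\pmod{24}.
\]
Adding the two and using $\sum_\delta r_\delta=2k$ collapses them, at $m=1$, into the single condition $(p+1)\cdot 2k\equiv 0\pmod{24}$, which immediately eliminates $(2,2),(2,3),(2,7),(4,3)$ and $(6,2)$.

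For the remaining survivors $(2,5),(2,25),(6,4)$, I would enumerate the tuples $(r_\delta)$ satisfying both Ligozat's congruences and holomorphy at each cusp (using Ligozat's order formula), and then check membership in $\mathcal{E}_k(p^m)$ by matching the first few Fourier coefficients against the relevant Eisenstein basis. For instance, the $(2,5)$ case leaves only the two holomorphic eta quotients $\eta(z)^{-1}\eta(5z)^{5}$ and $\eta(z)^{5}\eta(5z)^{-1}$; comparing their $q$-expansions with the one-dimensional $\mathcal{E}_2(5)=\qqq\cdot(E_2(z)-5E_2(5z))$ confirms neither lies on that line. The existence claim for each of the six pairs in the conclusion is known: the first identity of Section \ref{S1} handles $(2,4)$, the Ramanujan--Mordell identity with $k=2$ produces an eta quotient in $\mathcal{E}_4(4)$ for $(4,4)$, the classical $\eta(z)^{-8}\eta(2z)^{16}\in\mathcal{E}_4(2)$ handles $(4,2)$, and the remaining $(2,8),(2,9),(2,16)$ appear in the tables of \cite{AT19}.

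The main obstacle is the final case-by-case step: Ligozat's congruences kill most extra candidates instantly, but the few survivors $(2,5),(2,25),(6,4)$ require an honest enumeration of compatible exponent tuples (including tuples with large $|r_\delta|$) followed by a Fourier-coefficient comparison with the basis of $\mathcal{E}_k(p^m)$ to complete the elimination.
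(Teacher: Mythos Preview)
Your approach is essentially the same as the paper's: both combine Theorem \ref{maingen} with the valence formula to produce the identical list of nineteen candidate pairs, and both then eliminate the thirteen extras case by case. The paper simply invokes the algorithms of \cite{AT19} for that elimination, whereas you spell out a more hands-on route via Ligozat's congruences and Fourier comparison. Your enumeration of the surviving pairs and the reduction of the inequality to $k(p+1)p^{(m-1)/2}<24$ (odd $m$), $kp^{m/2}<12$ (even $m$, $p^m\neq 4$), and $k<8$ ($p^m=4$) are all correct.

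Two minor remarks. First, in your $(2,5)$ illustration the character condition already kills the case: for a level-$5$ eta quotient with trivial nebentypus one needs $5^{r_5}$ to be a rational square, forcing $r_5$ even, while your congruence gives $r_5\equiv 5\pmod 6$; so there is nothing to compare with $E_2(z)-5E_2(5z)$. (The two eta quotients you list lie in $M_2(\Gamma_0(5),\chi_5)$, not in $M_2(\Gamma_0(5))$.) Second, the theorem as stated is a non-existence assertion outside the six pairs; it does not claim that each of the six actually contains an eta quotient, so your existence paragraph, while correct, is not needed for the proof.
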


In Corollary \ref{etaqlist} below the {\it trivial extensions} mean the following:
\begin{enumerate}
\item[] If $f(z) \in \mathcal{E}_k(N)$, then $cf(t_1z) \in \mathcal{E}_k(t_2 N)$ for all $t_2 \in \nn$, $t_1 \mid t_2$ and $c\in \qqq$.
%\item[i)] If $f(z) \in \mathcal{E}_k(N)$ then $cf(tz) \in \mathcal{E}_k(tN)$ for all $t \in \nn$ and $c\in \qqq$.
%\item[ii)] If $f(z) \in \mathcal{E}_k(N)$ then $cf(z) \in \mathcal{E}_k(tN)$ for all $t \in \nn$ and $c\in \qqq$.
\end{enumerate}

\begin{corollary} \label{etaqlist} Let $p$ be prime, $m \in \nn_0$.
\begin{enumerate}
\item The set of eta quotients %in the second columns of identities $f_{4,1a}$, $f_{4,1b}$, $f_{4,2}$, $f_{8,1a}$, $f_{8,1b}$, $f_{8,2}$, $f_{8,3}$, $f_{9,1}$, $f_{16,1a}$, $f_{16,1b}$, $f_{16,2}$, $f_{16,3}$ in \cite[Table 2]{AT19} 
%\bals
%& 
\begin{multline*}
\left\{ \frac{\eta^8(z)}{\eta^4(z)}, \frac{\eta^8(4z)}{\eta^4(2z)}, \frac{\eta^{20}(2z)}{\eta^8(z)\eta^8(4z)}, \frac{\eta^4(z)\eta^{10}(4z)}{\eta^6(2z)\eta^4(8z)}, \frac{\eta^{10}(2z)\eta^4(8z)}{\eta^4(z)\eta^6(4z)}, 
\right.\\
\frac{\eta^6(2z)\eta^6(4z)}{\eta^4(z)\eta^4(8z)}, \frac{\eta^4(z)\eta^4(8z)}{\eta^2(2z)\eta^2(4z)}, 
%\right.\\
%& \left. 
 \frac{\eta^{10}(3z)}{\eta^3(z)\eta^3(9z)}, 
\frac{\eta^2(z)\eta^8(4z)\eta(8z)}{\eta^5(2z)\eta^2(16z)}, 
\\ \left.
\frac{\eta(2z)\eta^8(4z)\eta^2(16z)}{\eta^2(z)\eta^5(8z)}, \frac{\eta(2z)\eta^6(4z)\eta(8z)}{\eta^2(z)\eta^2(16z)}, \frac{\eta^2(z)\eta^{10}(4z)\eta^2(16z)}{\eta^5(2z)\eta^5(8z)} \right\}
\end{multline*}
%\nals
is the complete set of eta quotients in $\mathcal{E}_2(p^m)$ (up to trivial extensions).
\item The set of eta quotients
\bals
\left\{ \frac{\eta^{16}(2z) }{\eta^{8}(z) }, \frac{\eta^{40}(2z) }{\eta^{16}(z) \eta^{16}(4z) }, \frac{\eta^{8}(z) \eta^{8}(4z) }{\eta^{8}(2z) }, \frac{\eta^{16}(z) }{\eta^{8}(2z) }  \right\}
\nals
is the complete set of eta quotients in $\mathcal{E}_4(p^m)$ (up to trivial extensions). 
\item If $k>4$, then there are no eta quotients in $\mathcal{E}_k(p^m)$.
\end{enumerate}
\end{corollary}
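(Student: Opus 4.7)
The plan is to combine Theorem \ref{main} with direct enumeration at the six exceptional levels, together with an induction on $m$ to handle the infinitely many higher-weight cases. The key structural observation is that, by construction of $\mathcal{O}_k(p^m)$, every eta quotient in $\mathcal{E}_k(p^m)$ either lies in $\mathcal{P}_k(p^m)$ or is a trivial extension (in the sense defined before the corollary) of an eta quotient in $\mathcal{E}_k(p^j)$ with $j < m$. I would therefore induct on $m$. The base case $m = 0$ reduces to observing that $\mathcal{E}_k(1) = \langle E_k(z) \rangle_{\qqq}$ and that the only eta quotients of weight $k$ at level $1$ are constant multiples of $\eta(z)^{2k}$; a comparison of $q$-expansions (the Eisenstein series has a nonzero constant term, while $\eta^{2k}$ has order $k/12$ at the unique cusp) rules out every $k \geq 2$ except the irrelevant coincidence $k = 12$, $\eta^{24} = \Delta$, which is a cusp form and not in $\mathcal{E}_{12}(1)$. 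The inductive step then reduces part (3) to showing that $\mathcal{P}_k(p^m)$ contains no eta quotient whenever $k > 4$, which is exactly Theorem \ref{main}.

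For parts (1) and (2), Theorem \ref{main} forces attention on the six exceptional pairs $(k, p^m) \in \{(2,4),(2,8),(2,9),(2,16),(4,2),(4,4)\}$. For each such pair, I would parametrize a generic eta quotient of level $p^m$ and weight $k$ as $f(z) = \prod_{t \mid p^m} \eta(tz)^{r_t}$ with $r_t \in \zz$ and $\sum_t r_t = 2k$, and impose the standard Ligozat--Newman modularity conditions: divisibility of $\sum_t t\, r_t$ and $\sum_t (p^m/t)\, r_t$ by $24$, the square condition on $\prod_t t^{r_t}$, and nonnegativity of the cusp-order formula at every cusp of $\Gamma_0(p^m)$. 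The valence formula for eta quotients gives
\[
\sum_{r \in R(p^m)} v_r(f) = \frac{k}{12}\bigl(p^m + p^{m-1}\bigr),
\]
and the first remark following Theorem \ref{maingen} promotes the sum bound to the much more restrictive pointwise bound $v_r(f) \leq 1$ at every cusp (with only the total bound available at $p^m = 4$). These congruences and inequalities form a finite Diophantine system whose solutions are short to list by hand. Each surviving candidate is then tested for membership in $\mathcal{E}_k(p^m)$ by writing it in the Eisenstein basis $\{E_k(tz) : t \mid p^m\}$, respectively $\{E_2(z) - d\, E_2(dz) : 1 < d \mid p^m\}$ when $k = 2$, and matching the first $\dim \mathcal{E}_k(p^m) + 1$ Fourier coefficients; several of the resulting identities are already recorded in Section \ref{S1}.

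I expect the main obstacle to be the enumeration at $p^m = 16$, where the exponent vectors have five coordinates and the cusp-order nonnegativity constraints at the intermediate cusps are the most delicate, leaving a noticeable (but still finite) pool of numerical candidates to disqualify by direct $q$-expansion comparison. The weight-four cases are by contrast short, since $\sum_t r_t = 8$ together with the square condition and the bound $v_r(f) \leq 1$ leaves almost no room, and part (3) requires no further case analysis once Theorem \ref{main} has been invoked.
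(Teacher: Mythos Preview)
Your proposal is correct and follows essentially the same approach as the paper: reduce every eta quotient in $\mathcal{E}_k(p^m)$ to one in $\mathcal{P}_k(p^{m'})$ for some $m'\le m$ via the definition of $\mathcal{O}_k$, invoke Theorem~\ref{main} to restrict to the six exceptional pairs, and then carry out a finite enumeration. The only cosmetic difference is that the paper delegates the enumeration step to Algorithm~4.2 of \cite{AT19}, whereas you spell out the Ligozat--Newman/valence constraints (augmented by the pointwise bound $v_r(f)\le 1$ from the remark after Theorem~\ref{maingen}) and propose to solve the resulting Diophantine system directly.
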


%\item[b)] If $f(z)$ is an eta quotient whose second derivative is also an eta quotient (or a constant multiple of an eta quotient) then $f(tz)$ is an eta quotient whose second derivative is also an eta quotient (or a constant multiple of an eta quotient) for all $t \in \nn$.
%\end{enumerate}

%We also disregard the following {\it trivial extensions} in our list in Corollary \ref{derlist}:
%\begin{enumerate}
%\item[a)] If $f(z)$ is an eta quotient whose derivative is also an eta quotient (or a constant multiple of an eta quotient) then $f^{\ell}(tz)$ is an eta quotient whose derivative is also an eta quotient (or a constant multiple of an eta quotient) for all $t, \ell \in \nn$.
%\item[b)] If $f(z)$ is an eta quotient whose second derivative is also an eta quotient (or a constant multiple of an eta quotient) then $f(tz)$ is an eta quotient whose second derivative is also an eta quotient (or a constant multiple of an eta quotient) for all $t \in \nn$.
%\end{enumerate}
It is known that the eta quotients in $\mathcal{E}_2(N)$ are intimately related to the eta quotients of level $N$ whose derivatives are eta quotients. More precisely, Lemma 2.1 of \cite{AT19} establishes a one-to-one correspondence between the eta quotients of the form $\sum_{1<\delta\mid N} r_\delta \left(E_2(z)-\delta E_2(\delta z)\right)\in \mathcal{E}_2(N)$ and the eta quotients $\prod_{0<\delta\mid N} \eta^{r_\delta}(\delta z)$ with $r_1=-\sum_{1<\delta \mid N} r_\delta$ whose derivatives are also eta quotients.

\begin{corollary} \label{derlist}
Let $p$ be prime, $m \in \nn_0$. The set of eta quotients 
\begin{multline*}
 \left\{ \frac{\eta^{8}(z)\eta^{16}(4z)}{\eta^{24}(2z)},
\frac{\eta^{3}(2z)}{\eta^{2}(z)\eta(4z)},
\frac{\eta^{8}(4z)}{\eta^{8}(z)},
\frac{\eta^{4}(z)\eta^{2}(4z)\eta^{4}(8z)}{\eta^{10}(2z)},
\frac{\eta^{5}(4z)}{\eta^{2}(z)\eta(2z)\eta^{2}(8z)},\right.\\
 \left. \frac{\eta^{2}(2z)\eta^{4}(8z)}{\eta^{4}(z)\eta^{2}(4z)},
\frac{\eta^{7}(2z)\eta^{2}(8z)}{\eta^{2}(z)\eta^{7}(4z)},
\frac{\eta^{3}(9z)}{\eta^{3}(z)},
\frac{\eta^{2}(z)\eta^{2}(4z)\eta^{2}(16z)}{\eta^{5}(2z)\eta(8z)},\right.\\
 \left.
\frac{\eta(2z)\eta^{5}(8z)}{\eta^{2}(z)\eta^{2}(4z)\eta^{2}(16z)},
\frac{\eta(2z)\eta^{2}(16z)}{\eta^{2}(z)\eta(8z)},
\frac{\eta^{5}(2z)\eta^{2}(16z)}{\eta^{2}(z)\eta^{5}(8z)}
 \right\}
\end{multline*}
%given in part (1) of Corollary \ref{etaqlist} 
% in the first columns of identities $f_{4,1a}$, $f_{4,1b}$, $f_{4,2}$, $f_{8,1a}$, $f_{8,1b}$, $f_{8,2}$, $f_{8,3}$, $f_{9,1}$, $f_{16,1a}$, $f_{16,1b}$, $f_{16,2}$, $f_{16,3}$ in \cite[Table 2]{AT19} 
is the complete set (up to trivial extensions) of eta quotients of level $p^m$ whose derivatives are also eta quotients.
\end{corollary}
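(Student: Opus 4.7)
The plan is to deduce this corollary directly from Corollary \ref{etaqlist}(1) combined with Lemma 2.1 of \cite{AT19}, which establishes a bijection between those elements of $\mathcal{E}_2(N)$ which are themselves eta quotients and the eta quotients of level $N$ whose derivatives are also eta quotients. Concretely, an eta quotient in $\mathcal{E}_2(N)$ admits a unique representation $\sum_{1<\delta\mid N} r_\delta\bigl(E_2(z)-\delta E_2(\delta z)\bigr)$, and the bijection sends the tuple $(r_\delta)_{1<\delta\mid N}$ to the eta quotient $\prod_{0<\delta\mid N}\eta^{r_\delta}(\delta z)$ with $r_1=-\sum_{1<\delta\mid N}r_\delta$. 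So the entire problem reduces to reading off the coefficients $(r_\delta)$ for each of the twelve eta quotients listed in Corollary \ref{etaqlist}(1) and then writing down the corresponding product of eta functions on the other side of the bijection.

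First I would take each of the twelve eta quotients provided by Corollary \ref{etaqlist}(1) (with $p^m\in\{4,8,9,16\}$, the only prime power levels in weight $2$ permitted by Theorem \ref{main}) and expand it as a $q$-series to a precision exceeding the Sturm bound for $M_2(\Gamma_0(p^m))$. Matching coefficients against the known $q$-expansions of the functions $E_2(z)-\delta E_2(\delta z)$ uniquely determines the integer tuple $(r_\delta)_{1<\delta\mid p^m}$ in each case. Several of these identities are already recorded in the literature (for example the first displayed identity in Section \ref{S1} handles the case $p^m=4$), and the remaining ones are obtained by the same routine linear algebra.

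Next I would feed each tuple $(r_\delta)$ into the bijection of \cite[Lemma 2.1]{AT19}, producing $\prod_{0<\delta\mid p^m}\eta^{r_\delta}(\delta z)$ with $r_1=-\sum_{1<\delta\mid p^m}r_\delta$. Direct verification shows that these products are precisely the twelve eta quotients displayed in the statement. Completeness then follows immediately from the fact that the correspondence is bijective: any eta quotient of level $p^m$ whose derivative is an eta quotient must arise from some element of $\mathcal{E}_2(p^m)$ that is an eta quotient, and by Corollary \ref{etaqlist}(1) we have already exhausted all such elements up to trivial extensions.

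The main obstacle is purely computational bookkeeping — correctly extracting the tuple $(r_\delta)$ from each of the twelve eta quotients in Corollary \ref{etaqlist}(1) and then translating each tuple through the bijection. No new analytic ingredient is required beyond the two results already cited.
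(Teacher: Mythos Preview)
Your proposal is correct and mirrors the paper's own proof: the paper likewise invokes \cite[Lemma~2.1]{AT19} together with Corollary~\ref{etaqlist}(1), citing \cite[Algorithm~4.3]{AT19} for the explicit antiderivative computation where you instead spell out the linear-algebra-on-$q$-expansions procedure by hand. The two arguments are essentially identical in structure.
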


In Corollary \ref{derlist} the {\it trivial extensions} mean the following:
\begin{enumerate}
\item[]  If $f(z)$ is an eta quotient whose derivative is also an eta quotient (or a constant multiple of an eta quotient), then $f^{\ell}(tz)$ is an eta quotient whose derivative is also an eta quotient (or a constant multiple of an eta quotient) for all $t, \ell \in \nn$.
\end{enumerate}

There is a previously known example of an eta quotient whose second derivative is also an eta quotient 
\bals
D^2 \left( \frac{\eta^{2}(2z)}{\eta^4(z)} \right) = 4 \frac{\eta^{18}(2z)}{\eta^{12}(z)}.
\nals
From Corollary \ref{etaqlist}(2) we see that 
$$\left( {\frac{\eta^{18}(2z)}{\eta^{12}(z)}}\right) \bigg/ \left( {\frac{\eta^2(2z)} {\eta^{4}(z)}}\right) = \frac{\eta^{16}(2z)}{\eta^{8}(z)} \in \mathcal{E}_4(4).$$
%$\ds {\frac{\eta^{18}(2z)}{\eta^{12}(z)}}{\frac{\eta^4(z)} {\eta^{2}(2z)}} = \frac{\eta^{16}(2z)}{\eta^{8}(z)} \in \mathcal{E}_4(4)$.
This is not a coincidence. In fact we will show that any integer solution $(r_1, r_2, r_4)$ of the system
\bals
\begin{cases}
r_1+r_2+r_4=-2,\\
\frac{5}{12} r_1^2 + \frac{1}{3} r_1r_2=s_1,\\
\frac{5}{3} r_2^2 + \frac{4}{3} r_1 r_2 + \frac{1}{2} r_1 r_4  + \frac{4}{3} r_2 r_4=s_2,\\
\frac{20}{3} r_4^2  + \frac{8}{3} r_1 r_4  + \frac{16}{3} r_2 r_4 =s_4,
\end{cases}
\nals
where $\sum_{0<\delta\mid 4} s_\delta E_4(\delta z)\in \mathcal{E}_4(4)$, gives rise to an eta quotient of weight $-1$ and level $4$ for which its second derivative is an eta quotient. As a consequence of this we classify all the level 4 eta quotients whose second derivatives are also eta quotients.

%{\color{red} all level $4$ eta quotients whose second derivatives are also eta quotients must be in $\mathcal{E}_4(4)$ and upon some investigation we see that the following statement holds.}
\begin{theorem} \label{secordprop}
Up to trivial extensions the only level $4$ eta quotient whose second derivative is also an eta quotient is $\ds \frac{\eta^{2}(2z)}{\eta^4(z)}$.
\end{theorem}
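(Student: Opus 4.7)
The approach I would take is to leverage the correspondence stated just before the theorem: a level-$4$ weight-$(-1)$ eta quotient $f=\eta^{r_1}(z)\eta^{r_2}(2z)\eta^{r_4}(4z)$ has $D^2 f$ equal to a (constant multiple of an) eta quotient if and only if $(r_1,r_2,r_4)\in\zz^3$ satisfies the displayed quadratic system together with the weight equation $r_1+r_2+r_4=-2$, where $(s_1,s_2,s_4)$ corresponds to an element $\sum_{\delta\mid 4}s_\delta E_4(\delta z)\in\mathcal{E}_4(4)$ that is itself an eta quotient. The theorem thus reduces to a finite enumeration, since Corollary \ref{etaqlist}(2) gives an explicit classification of the eta quotients in $\mathcal{E}_4(p^m)$, and restricting to the level-$4$ case produces finitely many candidate directions for $(s_1,s_2,s_4)$ up to scalar.

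For each candidate I would first compute the normalized Eisenstein coefficient vector $(s_1^{(0)},s_2^{(0)},s_4^{(0)})$ by matching the $q$-expansion of the corresponding eta quotient against the basis $\{E_4(z),E_4(2z),E_4(4z)\}$ of $\mathcal{E}_4(4)$. Up to scaling and trivial extensions to level $4$, the relevant eta quotients are $\eta^{16}(2z)/\eta^8(z)$, $\eta^{16}(4z)/\eta^8(2z)$, $\eta^{16}(z)/\eta^8(2z)$, $\eta^{16}(2z)/\eta^8(4z)$, $\eta^{40}(2z)/(\eta^{16}(z)\eta^{16}(4z))$, and $\eta^8(z)\eta^8(4z)/\eta^8(2z)$, yielding vectors such as $(1,-1,0)$, $(0,1,-1)$, $(-1,16,0)$, $(0,-1,16)$, $(1,-2,16)$, and $(1,-17,16)$ respectively.

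Next, for each direction I would substitute $(s_1,s_2,s_4)=c(s_1^{(0)},s_2^{(0)},s_4^{(0)})$ into the three quadratic equations, use the linear constraint to eliminate $r_4=-2-r_1-r_2$, and then eliminate the scalar $c$ by taking two independent combinations of the quadratics. Each case collapses to one or two polynomial identities in $(r_1,r_2)$ and ultimately to a quadratic in a single variable whose discriminant determines the existence of integer solutions. A direct case-by-case verification shows that only the directions associated with $\eta^{16}(2z)/\eta^8(z)$ and its trivial extension $\eta^{16}(4z)/\eta^8(2z)$ admit integer triples with $\sum_{t\mid 4} t\,r_t\equiv 0\pmod{24}$, producing exactly $(r_1,r_2,r_4)=(-4,2,0)$ and $(0,-4,2)$; both correspond to $f(z)=\eta^2(2z)/\eta^4(z)$ up to the substitution $z\mapsto 2z$.

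The main obstacle is the case-by-case analysis itself: each of the six candidate directions yields a distinct quadratic Diophantine problem whose integer-solvability must be verified. In each of the non-contributing cases the controlling discriminant turns out to be a non-square rational; for instance, one case reduces to $3m^2+44m+80=0$ with discriminant $1936-960=976=16\cdot 61$, which is not a perfect square, while another reduces to $117m^2+116m+20=0$ with discriminant $4096=64^2$ but producing only non-integer roots. Consolidating these verifications establishes that, up to trivial extensions, $\eta^2(2z)/\eta^4(z)$ is the unique level-$4$ eta quotient whose second derivative is an eta quotient, which proves the theorem.
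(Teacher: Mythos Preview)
Your approach is essentially the same as the paper's. The paper's proof first derives the displayed quadratic system: it takes $f=\eta^{r_1}(z)\eta^{r_2}(2z)\eta^{r_4}(4z)$, computes $D^2(f)/f$ via a double logarithmic derivative and the convolution identities \eqref{conv1}--\eqref{convlast}, invokes Proposition~\ref{2nd-der} to force $r_1+r_2+r_4=-2$ (which kills the $D(E_2)$ terms), and concludes that $D^2(f)/f\in\mathcal{E}_4(4)$; it then finishes with the one-line remark that the result follows by checking the eta quotients in $\mathcal{E}_4(4)$ found via the algorithms of \cite{AT19}. You take the system and the weight constraint as given from the pre-theorem discussion and instead spell out the finite check explicitly---listing the six level-$4$ candidates from Corollary~\ref{etaqlist}(2), computing their Eisenstein coordinates, and solving the resulting Diophantine problems case by case. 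So the two proofs partition the work differently: the paper emphasizes the derivation of the system and outsources the enumeration, while you outsource the derivation and carry out the enumeration by hand; the underlying strategy is identical.
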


The {\it trivial extensions} in Theorem \ref{secordprop} mean the following:
\begin{enumerate}
\item[] 
If $f(z)$ is an eta quotient whose second derivative is also an eta quotient (or a constant multiple of an eta quotient), then $f(tz)$ is an eta quotient whose second derivative is also an eta quotient (or a constant multiple of an eta quotient) for all $t \in \nn$.
\end{enumerate}

The arguments we use to prove Theorem \ref{secordprop} will not hold for eta quotients in most of the levels other than $4$ because formulas analogous to \eqref{conv1}--\eqref{conv3} in other levels almost always involve cusp forms of weight $4$. So the second derivative of an eta quotient in general may not be in $\mathcal{E}_4(N)$.

The organization of the rest of the paper is as follows. In the next section we derive the Fourier series expansions of $E_k(tz)$ at each cusp using the expansion at $i \infty$ of $E_k(z)$ and some matrix relations. We employ these expansions and use the description of the Eisenstein series in $\mathcal{P}_k(p^m)$ to prove Theorem \ref{main}. In Section \ref{sec3} we prove Theorems \ref{maingen} and \ref{main}, and Corollaries \ref{etaqlist} and \ref{derlist} by using the results of Section \ref{sec2}. Finally in Section \ref{sec4} we work on the second derivatives of eta quotients and prove Theorem \ref{secordprop}.

%Let $N\geq 1$ and $k\geq 0$ be fixed positive integers and let letters $c$, $d$ denote divisors of $N$.  Let $D(N)=\{c_0=1, c_1, \ldots, c_m=N\}$ be the set of distinct divisors of $N$, thus, $m+1=\tau(N)$. 

%The proof of Theorem \ref{main} comes as a direct consequence of the following fundamental fact regarding the order of vanishing of $\mathcal{E}$-primitive Eisenstein series in $\mathcal{E}_k(p^m)$.

%\begin{theorem} 
%\label{main-lemma} For $a/c\in R(\Gamma_0(p^m))$ and $f(z)\in \mathcal{E}_k(p^m)$, let $v_{a/c}(f(z)$ be the order of vanishing of $f(z)$ at the cusp $a/c$. Then for $\mathcal{E}$-primitive $f(z)$ we have
%$v_{a/c}(f)\leq 1.$
%\end{theorem}

\section{The Fourier series expansions of $E_k(tz)$ at cusps $a/c$} \label{sec2}

Let $t \in \nn$. To prove Theorem \ref{maingen} we need to derive the Fourier series expansion of $E_{k}(tz)$ at $a/c$ where $a,c \in \zz$ satisfy $\gcd(a,c)=1$. We follow an approach similar to the proof of \cite[Proposition 2.1]{kohler} where constant terms of Dedekind eta function is calculated. For notational convenience let us denote
\bals
E_{k,t}(z):=E_k(tz) = \frac{-B_{k}}{2k} + \sum_{n \geq 1} \sigma_{k-1}(n) q^{tn}.
\nals
Define 
\bals
\iota_k := \begin{cases}
1 & \mbox{if $k=2$,}\\
0 & \mbox{otherwise.}
\end{cases}
\nals
Set
\bals
q_{c,N}:= e^{2 \pi {\rm i}  \gcd(c^2,N) z/N},
\nals
and
\bals
\omega_{M,t}:=\begin{cases}
1 & \mbox{if $c \equiv 0 \pmod{t}$,}\\
e^{(\frac{- 2 \pi {\rm i}  \gcd(t,c) df }{t})} & \mbox{if $c \not \equiv 0 \pmod{t}$,}
\end{cases}
\nals
where $M = \begin{pmatrix} a & b \\ c & d \end{pmatrix} \in \SL_2(\zz)$.
\begin{lemma} \label{expansionlemma}
Let $a/c$ be a rational cusp of $\Gamma_0(N)$, where $(a, c)=1$. Let $b$ and $d$ be two integers such that  $M = \begin{pmatrix} a & b \\ c & d \end{pmatrix} \in \SL_2(\zz)$. If $f(z) \in \mathcal{E}_k(N)$, then the Fourier series expansion of $f(z)=\sum_{t \mid N} r_t E_k(tz)$ at the cusp $a/c$ is given by
%\bals
\begin{multline*}
(c z + d)^{-k} f(Mz) = (c z + d)^{-k} \sum_{t \mid N} r_t E_{k,t}(Mz) \\= \sum_{t \mid N} \sum_{n \geq 0} a_n(c,t) r_t \omega_{M,t}^n q_{c,N}^{ n \gcd(t,c)^2 N/ t \gcd(c^2,N) },
\end{multline*}
%\nals
%\bals
%(c z + d)^{-2} E_{2,t}(Mz)
%E_{k, t}(Mz) = (cz+d)^k \sum_{n \geq 0} a_n(c,t) \omega_{M,t}^n q_{c,N}^{ n \gcd(t,c)^2 N/ t \gcd(c^2,N) } - \iota_k \frac{6i c}{ (c z + d) t \pi},
%\nals
where $a_n(c, t) \neq 0$ are rational numbers given explicitly in the proof below.
\end{lemma}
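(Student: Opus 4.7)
The plan is to adapt Köhler's matrix-decomposition technique for the Dedekind eta function (as in \cite{kohler}) to each summand $E_{k,t}(Mz)$ separately, and then assemble the pieces into the full expansion of $f$.

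The first step would be to factor $\bigl(\begin{smallmatrix} t & 0 \\ 0 & 1 \end{smallmatrix}\bigr) M = M' T$, with $M' \in \SL_2(\zz)$ (so modularity of $E_k$ applies) and $T$ upper triangular. Writing $g = \gcd(t,c)$, $t_1 = t/g$, $c_1 = c/g$ (so that $\gcd(t_1, c_1) = 1$), matching of entries forces $T = \bigl(\begin{smallmatrix} g & \beta \\ 0 & t_1 \end{smallmatrix}\bigr)$ and $M' = \bigl(\begin{smallmatrix} t_1 a & b' \\ c_1 & d' \end{smallmatrix}\bigr)$, where $b', d'$ are chosen so that $\det M' = 1$, and $\beta = (d - d' t_1)/c_1$. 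Integrality of $\beta$ is automatic: $ad \equiv 1 \pmod{c_1}$ (from $\det M = 1$) together with $t_1 a d' \equiv 1 \pmod{c_1}$ (from $\det M' = 1$) forces $d \equiv d' t_1 \pmod{c_1}$. This yields $tMz = M' w$ with $w = (gz + \beta)/t_1$.

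Next, for $k \ge 4$, full modularity of $E_k$ for $\SL_2(\zz)$ combined with the direct check $c_1 w + d' = (cz+d)/t_1$ would give
\[
(cz+d)^{-k} E_{k,t}(Mz) = t_1^{-k} E_k(w).
\]
Substituting $w = (gz + \beta)/t_1$ into the $q$-expansion of $E_k$ at $i\infty$ splits each exponential as $e^{2\pi {\rm i}  n \beta/t_1} \cdot e^{2\pi {\rm i}  n g z/t_1}$; the first factor is a $t_1$-th root of unity, to be identified with $\omega_{M,t}^n$ (reducing to $1$ precisely when $t \mid c$, so $t_1 = 1$), while the second rewrites as $q_{c,N}^{n \gcd(t,c)^2 N/(t \gcd(c^2,N))}$, using $g/t_1 = \gcd(t,c)^2/t$. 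Reading off coefficients then yields $a_n(c,t) = t_1^{-k} \sigma_{k-1}(n)$ for $n \ge 1$ and a nonzero rational constant term coming from $-B_k/(2k)$.

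The case $k = 2$ will need separate treatment because $E_2$ is only quasi-modular: the transformation under $M'$ introduces an additional term which, after dividing by $(cz+d)^2$, contributes a non-holomorphic summand proportional to $(c/t)/(cz+d)$. However, $\mathcal{E}_2(N)$ is spanned by the elements $E_2(z) - dE_2(dz)$, so the coefficients $r_t$ of any $f\in\mathcal{E}_2(N)$ satisfy $\sum_{t \mid N} r_t/t = 0$, and these corrections telescope and cancel inside $f$, restoring the holomorphic formula displayed in the lemma. The main obstacle I foresee is the bookkeeping for the matrix factorization in Step 1 --- specifically the verification that $\beta \in \zz$ and the precise identification of the phase $\omega_{M,t}$ in the two cases of its definition --- rather than any deep input from the theory of modular forms.
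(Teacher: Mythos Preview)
Your proposal is correct and follows essentially the same route as the paper: the same matrix factorization $\bigl(\begin{smallmatrix} t & 0 \\ 0 & 1 \end{smallmatrix}\bigr) M = M' T$ with $M'\in\SL_2(\zz)$ and $T$ upper triangular (the paper writes the entries of $M'$ as $e,f,g,h$ with $eh-fg=1$), the same appeal to the (quasi-)modularity of $E_k$ under $M'$, substitution of the $q$-expansion at $i\infty$, and the same use of $\sum_{t\mid N} r_t/t=0$ to kill the non-holomorphic correction when $k=2$. The only cosmetic difference is that the paper reduces the shift $\beta$ modulo $t_1$ (via $E_k(z+1)=E_k(z)$) before identifying the resulting phase with $\omega_{M,t}$, a step you flag but do not carry out explicitly.
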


\begin{proof}
It is known that
\bal
E_k(Mz)=E_k \left( \frac{az+b}{cz+d} \right)=(cz+d)^k E_k(z) +\iota_k \frac{{\rm i} c}{ 4\pi} (cz+d) \label{exp4}
\nal
for all $M = \begin{pmatrix} a & b \\ c & d \end{pmatrix} \in \SL_2(\zz)$ (see \cite[Corollary 5.2.17.(b)]{CS17}). We have
\bal
E_{k,t}(Mz) = E_k (t M z) = E_k \left( \frac{at z + bt}{cz+d} \right). \label{exp1}
\nal
Now let $e := \frac{at}{\gcd(t,c)}$ and $g := \frac{c}{\gcd(t,c)}$. It is clear that $e,g \in \zz$ and since $\gcd(e,g)=1$, there exists   $f,h \in \zz$ such that
\bal
& eh-fg=1. \label{exp2}
\nal
Therefore $\begin{pmatrix} e & f \\ g & h \end{pmatrix} \in \SL_2(\zz)$. Hence using \eqref{exp2} we obtain
\bal
\begin{pmatrix} at & bt \\ c & d \end{pmatrix} = \begin{pmatrix} e & f \\ g & h \end{pmatrix}  \begin{pmatrix} a h t - c f & bht - df \\ 0 & -bgt + de \end{pmatrix}.  \label{exp3}
\nal
Putting \eqref{exp1} and \eqref{exp3} together we obtain
\bals
E_{k,t}(Mz)= E_k \left( \begin{pmatrix} e & f \\ g & h \end{pmatrix}  \begin{pmatrix} a h t - c f & bht - df \\ 0 & -bgt + de \end{pmatrix} z \right).
\nals
Let
\bals
z_{M} := \frac{(a h t - c f) z + (bht - df) }{-bgt + de}.
\nals
Then using $a h t - c f = \gcd(t,c)$ we get
\begin{equation}
\label{gz}
gz_M+h 
%&%= g \frac{(a h t - c f) z + (bht - df) }{-bgt + de} +h  = \frac{(a h t - c f) z + (bht - df) }{-bt + de/g} +h  \\
%& = \frac{(a h t - c f) z + (bht - df) - (hbt - hde/g) }{-bt + de/g} = \frac{(a h t - c f) z - df + hde/g }{-bt + de/g} \\
%& = \frac{(a h t - c f) z - df + hdat/c }{-bt + dat/c} = \frac{(a h t - c f) c z - cdf + hdat }{-bct + dat} \\
%& = \frac{a h t (c z + d)  - c f (c z + d)  }{t} = \frac{(a h t  - c f) (c z + d)  }{t} \\
  = \frac{\gcd(t,c)}{t} (c z + d).
\end{equation}
Observe that
\begin{equation}
\label{bg}
-bgt + de=t/\gcd(t,c).
\end{equation}
Now, by employing  \eqref{gz}, \eqref{bg}, and \eqref{exp4} we have
\bals
E_{k,t}(Mz) %& = E_2 \left( \begin{pmatrix} e & f \\ g & h \end{pmatrix}  z_{M} \right) = (gz_M+h)^2 E_2(z_M) - \frac{6ig}{ \pi} (gz_M+h)\\
%& = \left(\frac{\gcd(t,c)}{t} (c z + d) \right)^2 E_2(z_M) - \frac{6i c}{ \pi \gcd(t,c)} \left(\frac{\gcd(t,c)}{t} (c z + d)\right)\\
%& = \left(\frac{\gcd(t,c)}{t} \right)^2 (c z + d)^2 E_2\left( \frac{(a h t - c f) z + (bht - df) }{-bgt + de} \right) - \frac{6i c}{ t \pi} \left( c z + d \right) \\
%& = \left(\frac{\gcd(t,c)}{t} \right)^2 (c z + d)^2 E_2\left( \frac{\gcd(t,c) z + (bht - df) }{-bgt + de} \right) - \frac{6i c}{ t \pi} (c z + d) \\
%& = \left(\frac{\gcd(t,c)}{t} \right)^2 (c z + d)^2 E_2\left( \frac{\gcd(t,c)^2 z + \gcd(t,c) (bht - df) }{t} \right)\\
%& \quad  - \frac{6i c}{ t \pi} (c z + d) \\
 = \left(\frac{\gcd(t,c)}{t} \right)^k (c z + d)^k E_k\left( \frac{\gcd(t,c)^2 z - \gcd(t,c) df }{t} \right) +\iota_k \frac{{\rm i} c}{ 4\pi t} (cz+d),
 \nals
where in the calculations we use \eqref{exp2}
and
\bals
E_2(z+1)=E_2(z).
\nals
Additionally, if $c \equiv 0 \pmod{t}$, then $\gcd(t,c) df \equiv 0 \pmod{t}$. That is, we have
\bal
E_{k,t}(Mz)=\begin{cases}
\left(\frac{\gcd(t,c)}{t} \right)^k (c z + d)^k E_k\left( \frac{\gcd(t,c)^2 z}{t} \right) & \\
\quad 
%- \iota_k \frac{6 {\rm i} c}{ t \pi} (c z + d) 
+\iota_k \frac{{\rm i} c}{ 4\pi t} (cz+d),
& \mbox{if $c \equiv 0 \pmod{t}$,}\\
\left(\frac{\gcd(t,c)}{t} \right)^k (c z + d)^k E_k\left(\frac{\gcd(t,c)^2 z - \gcd(t,c) df }{t} \right) & \\
\quad
+\iota_k \frac{{\rm i} c}{ 4\pi t} (cz+d),
%\quad  - \iota_k \frac{6 {\rm i} c}{ t \pi} (c z + d) 
& \mbox{if $c \not\equiv 0 \pmod{t}.$}
\end{cases} \label{exp5}
\nal
Applying \eqref{exp6} in \eqref{exp5} we obtain the following Fourier series expansion of $E_k(tz)$ at $a/c$:
%\bal
\begin{multline}
E_{k,t}(Mz)\\
%\end{multline}
%$$
=\begin{cases}
\left(\frac{\gcd(t,c)}{t} \right)^k (c z + d)^k \left( \frac{-B_k}{2k} + \sum_{n \geq 1} \sigma_{k-1}(n) e^{2 \pi {\rm i}  n \gcd(t,c)^2 z/t} \right) & \\
\quad 
%- \iota_k \frac{6 {\rm i}  c}{ t \pi} (c z + d) 
+ \iota_k \frac{{\rm i} c}{ 4\pi t} (cz+d),
& \hspace{-1.5cm} \mbox{if $c \equiv 0 \pmod{t}$,}\\
\left(\frac{\gcd(t,c)}{t} \right)^k (c z + d)^k \left( \frac{-B_k}{2k} + \sum_{n \geq 1} \sigma_{k-1}(n) e^{2 \pi {\rm i}  n (\frac{\gcd(t,c)^2 z - \gcd(t,c) df }{t})}    \right) & \\
\quad  
%- \iota_k \frac{6 {\rm i}  c}{ t \pi} (c z + d) 
+\iota_k \frac{{\rm i} c}{ 4\pi t} (cz+d),
& \hspace{-1.5cm} \mbox{if $c \not\equiv 0 \pmod{t}.$}
\end{cases} \label{exp7}
\end{multline}
%$$
Note that the width of the cusp $a/c$ in $\Gamma_0(N)$ is given by $\frac{N}{\gcd(c^2,N)}$ (\cite[Corollary 6.3.24.(a)]{CS17}) and recall that we define 
\bals
q_{c,N}= e^{2 \pi {\rm i}  \gcd(c^2,N) z/N}.
\nals
Employing this in \eqref{exp7}, we obtain
%\bal
\begin{multline}
(c z + d)^{-k} E_{k,t}(Mz)\\
%\end{multline}
=\begin{cases}
\sum_{n \geq 0} a_n q_{c,N}^{ n \gcd(t,c)^2 N/ t \gcd(c^2,N) } 
%- \iota_k \frac{6 {\rm i} c}{  t \pi (c z + d)}  
+\iota_k \frac{ {\rm i} c}{  4 \pi t (c z + d)}
& \mbox{if $c \equiv 0 \pmod{t}$,}\\
\sum_{n \geq 0} a_n e^{(\frac{- 2 \pi {\rm i}  n \gcd(t,c) df }{t})} q_{c,N}^{ n \gcd(t,c)^2 N /t\gcd(c^2,N)  }   
%- \iota_k \frac{6 {\rm i}  c}{ t \pi (c z + d) }  
+\iota_k \frac{ {\rm i} c}{  4 \pi t (c z + d)}
& \mbox{if $c \not\equiv 0 \pmod{t},$}
\end{cases} \label{exp8}
\end{multline}
%\nal
where, for a fixed $k$, $a_n$ depends on $c$ and $t$ and given by
\bal
a_n:=a_n (c,t)=\begin{cases}
\ds \left(\frac{\gcd(t,c)}{t} \right)^k \cdot \frac{-B_k}{2k} & \mbox{if $n=0$,}\\
\ds \left(\frac{\gcd(t,c)}{t} \right)^k  \sigma_{k-1}(n) & \mbox{if $n>0$,}
\end{cases} \label{anval}
\nal
and hence $a_n(c,t)  \in \qqq$ and $a_n(c,t) \neq 0$ for all $n \in \nn_0$. Now recall that
\bals
\omega_{M,t}=\begin{cases}
1 & \mbox{if $c \equiv 0 \pmod{t}$,}\\
e^{(\frac{- 2 \pi {\rm i}  \gcd(t,c) df }{t})} & \mbox{if $c \not \equiv 0 \pmod{t}$.}
\end{cases}
\nals
Using this and noting that $a_n(c,t)$ is a function of $c$, $t$ and $k$, we write \eqref{exp8} as
\bal
(c z + d)^{-k} E_{k,t}(Mz) = \sum_{n \geq 0} a_n(c,t) \omega_{M,t}^n q_{c,N}^{ n \gcd(t,c)^2 N/ t \gcd(c^2,N) }
+\iota_k \frac{ {\rm i} c}{  4 \pi t (c z + d)}.
 %- \iota_k \frac{6 {\rm i}  c}{ t \pi (c z + d) }. 
 \label{mar01}
\nal

Now, let $f(z) \in \mathcal{E}_k(N)$, then we have
\bal
f(z)=\sum_{t \mid N} r_t E_k(tz), \label{mar02}
\nal
where, when $k=2$, $r_t$ satisfy %r_t complex or not???
\bal
\sum_{t \mid N} \frac{r_t}{t} = 0, \label{e2q}
\nal
and $\iota_k=0$ for all even $k \geq 4$, that is, for all even $k \geq 2$ we have
\bals
\iota_k \sum_{t \mid N} r_t \frac{{\rm i}  c}{ 4\pi t (c z + d)} = \iota_k \frac{ {\rm i}  c}{4 \pi (c z + d)} \sum_{t \mid N} \frac{r_t}{t} = 0.
\nals
Therefore from \eqref{mar01} and \eqref{mar02} we have the desired result.
%\bals
%(c z + d)^{-k} f(z) = (c z + d)^{-k} \sum_{t \mid N} r_t E_{k,t}(Mz) = \sum_{t \mid N} \sum_{n \geq 0} a_n(c,t) r_t \omega_{M,t}^n q_{c,N}^{ n \gcd(t,c)^2 N/ t \gcd(c^2,N) }.
%\nals
\end{proof}

\section{Proof of Theorem \ref{maingen}} \label{sec3}
In this section we let $N=p^m$ for $p$ a prime and $m$ a positive integer. Then for any $f(z)  \in \mathcal{E}_k(p^m)$ we have
\bals
f(z) = \sum_{t \mid p^m} r_t E_{k,t}(z),
\nals
where $r_t \in \qqq $ and should satisfy \eqref{e2q} when $k=2$. Let $M = \begin{pmatrix} a & b \\ c & d \end{pmatrix} \in \SL_2(\zz)$. Then for all $c \mid p^m$, by Lemma \ref{expansionlemma}, we have
\bals
(c z + d)^{-k} f(Mz) = \sum_{t \mid p^m } \sum_{n \geq 0} a_n(c,t) r_t \omega_{M,t}^n q_{c,p^m}^{ n \gcd(t,c)^2 p^m/ t \gcd(c^2,p^m) }.
\nals
Since all divisors of $p^m$ are of the form $p^i$, for $0 \leq i \leq m$ we have 
\bal
(p^i z + d)^{-k} f(Mz) = \sum_{ j= 0}^{m} \sum_{n \geq 0} a_n(p^i,p^j) r_{p^j} \omega_{M,p^j}^n q_{p^i,p^m}^{ n \gcd(p^j,p^i)^2 p^m/ p^j \gcd(p^{2i},p^m) }. \label{exp9}
\nal
On the other hand we have
\bals
\omega_{M,p^j} & =\begin{cases}
1 & \mbox{if $p^i \equiv 0 \pmod{p^j}$}\\
e^{- 2 \pi {\rm i} \gcd(p^j,p^i) d f/p^{j}} & \mbox{if $p^i \not \equiv 0 \pmod{p^j}$}
\end{cases}%\\
 %& =\begin{cases}
%1 & \mbox{if $i \geq j$,}\\
%e^{- 2 \pi i \gcd(p^j,p^i) d f /p^{j}} & \mbox{if $i < j$,}
%\end{cases}\\
 =\begin{cases}
1 & \mbox{if $i \geq j$,}\\
e^{- 2 \pi {\rm i}  d f/p^{j-i}} & \mbox{if $i < j$,}
\end{cases}
\nals
and
\bals
\gcd(p^j,p^i)^2 p^m/ p^j \gcd(p^{2i},p^m) %& =\begin{cases}
%p^{2j+m}/p^{j+m} & \mbox{if $i \geq j$ and $i \geq m/2$,}\\
%p^{2i+m}/p^{j+m} & \mbox{if $i < j$ and $i \geq m/2$,}\\
%p^{2j+m}/p^{j+2i} & \mbox{if $i \geq j$ and $i < m/2$,}\\
%p^{2i+m}/p^{j+2i} & \mbox{if $i < j$ and $i < m/2$,}\\
%\end{cases}\\
 =\begin{cases}
p^{j} & \mbox{if $i \geq j$ and $i \geq m/2$,}\\
p^{2i-j} & \mbox{if $i < j$ and $i \geq m/2$,}\\
p^{m+j-2i} & \mbox{if $i \geq j$ and $i < m/2$,}\\
p^{m-j} & \mbox{if $i < j$ and $i < m/2$.}\\
\end{cases}
\nals
We put these into \eqref{exp9} to obtain
\bal
(p^i z + d)^{-k} f(Mz) %= \sum_{ j= 0}^{m} \sum_{n \geq 0} a_n(p^i,p^j) r_{p^j} \omega_{M,p^j}^n q_{p^i,p^m}^{ n \gcd(ap^j,p^i)^2 p^m/ p^j \gcd(p^{2i},p^m) }\\
= \begin{cases}
\ds \sum_{i \geq j} \sum_{n \geq 0} a_n(p^i,p^j) r_{p^j} q_{p^i,p^m}^{ np^{j}} & \\
\ds + \sum_{ i < j} \sum_{n \geq 0} a_n(p^i,p^j) r_{p^j} e^{- 2 \pi {\rm i}  n d f/p^{j-i}}  q_{p^i,p^m}^{ n p^{2i-j}} & \mbox{if $i \geq m/2$,}\\
\ds \sum_{ i \geq j} \sum_{n \geq 0} a_n(p^i,p^j) r_{p^j} q_{p^i,p^m}^{ np^{m+j-2i}} & \\
\ds + \sum_{ i < j} \sum_{n \geq 0} a_n(p^i,p^j) r_{p^j} e^{- 2 \pi {\rm i}  n d f/p^{j-i}}  q_{p^i,p^m}^{ np^{m-j}} & \mbox{if $i < m/2$.}\\
\end{cases} \label{eqexpf}
\nal

\begin{lemma}\label{exp10}
Let $f(z) \in \mathcal{E}_k(p^m)$ where $p^m \neq 4$. If $v_{a/p^i}(f(z))>1$ for some $0 \leq i \leq m$, then either $r_1 =0$ or $r_{p^m} =0$.
\end{lemma}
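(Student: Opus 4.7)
The plan is to extract the coefficient of $q_{p^i,p^m}^{1}$ in the Fourier expansion \eqref{eqexpf} of $f(z)=\sum_{j=0}^{m} r_{p^j} E_{k,p^j}(z)$ at the cusp $a/p^i$, and to observe that its vanishing---which is forced by $v_{a/p^i}(f)>1$---pins down at least one of $r_1$ and $r_{p^m}$ to be zero. The key observation is that, as one scans over $j\in\{0,1,\ldots,m\}$ and $n\geq 1$, only the pairs $(j,n)=(0,1)$ in the first sum of \eqref{eqexpf} and $(j,n)=(m,1)$ in the second sum can possibly produce exponent $1$, and whether each of these pairs actually appears in the expansion at $a/p^i$ depends on the size of $i$ relative to $m/2$.

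I would then treat three cases. When $i<m/2$, the first sum of \eqref{eqexpf} contributes only exponents $np^{m+j-2i}\geq p$, so the coefficient of $q_{p^i,p^m}^{1}$ reduces to $a_{1}(p^i,p^m)\,r_{p^m}\,\omega_{M,p^m}$; since $a_{1}(p^i,p^m)\neq 0$ by \eqref{anval} and $\omega_{M,p^m}$ is a root of unity, its vanishing forces $r_{p^m}=0$. Symmetrically, when $i>m/2$, the second sum contributes only exponents $np^{2i-j}\geq p$, and the coefficient of $q_{p^i,p^m}^{1}$ reduces to $a_{1}(p^i,1)\,r_{1}$, forcing $r_{1}=0$.

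The hard case is $i=m/2$, which requires $m$ even and in which both contributions survive, giving the linear relation
\[
a_{1}(p^i,1)\,r_{1}+a_{1}(p^i,p^m)\,r_{p^m}\,\omega=0,\qquad \omega:=\omega_{M,p^m}=e^{-2\pi{\rm i}\,df/p^{m/2}}.
\]
Tracing through the definitions of $e,g,f,h$ used in the proof of Lemma \ref{expansionlemma}, the identity $eh-fg=1$ with $e=ap^{m/2}$ and $g=1$ yields $f\equiv -1\pmod{p^{m/2}}$, while $M\in\SL_2(\zz)$ with $c=p^{m/2}$ gives $\gcd(d,p^{m/2})=1$. Thus $\omega=e^{2\pi{\rm i} d/p^{m/2}}$ is a \emph{primitive} $p^{m/2}$-th root of unity. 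The hypothesis $p^m\neq 4$ forces $p^{m/2}\geq 3$ and hence $\varphi(p^{m/2})\geq 2$, so $\omega\notin\qqq$. Since $r_1,r_{p^m},a_1(p^i,1),a_1(p^i,p^m)\in\qqq$, the only way a $\qqq$-linear combination of $1$ and $\omega$ can vanish is for both coefficients to vanish, giving $r_1=r_{p^m}=0$.

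The main obstruction is precisely the need to exclude $p^m=4$: there $i=m/2=1$ forces $\omega$ to be a (rational) second root of unity, so the displayed relation no longer separates $r_1$ from $r_{p^m}$, exactly the exception flagged in Remark (3) after Theorem \ref{maingen}.
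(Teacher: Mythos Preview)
Your proof is correct and follows essentially the same strategy as the paper's: you split into the cases $i<m/2$, $i>m/2$, and $i=m/2$, read off the coefficient of $q_{p^i,p^m}^{1}$ from \eqref{eqexpf}, and in the middle case use that $\omega=e^{-2\pi{\rm i}\,df/p^{m/2}}$ is a primitive $p^{m/2}$-th root of unity (the paper phrases this as $\gcd(df,p)=1$, hence $\omega\notin\mathbb{R}$). Your extra step of computing $f\equiv -1\pmod{p^{m/2}}$ is a nice touch that makes the primitivity of $\omega$ completely explicit.
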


\begin{proof}
Let $f(z) \in \mathcal{E}_k(p^m)$. Let $0 \leq i < m/2$. By \eqref{eqexpf} the Fourier series expansion of $f(z)$ at $a/p^i$ is given by
%\bals
\begin{multline*}
(p^i z + d)^{-k} f(Mz) =
\ds \sum_{ i \geq j} \sum_{n \geq 0} a_n(p^i,p^j) r_{p^j} q_{p^i,p^m}^{ np^{m+j-2i}} \\+ \sum_{ i < j} \sum_{n\geq 0} a_n(p^i,p^j) r_{p^j} e^{- 2 \pi {\rm i}  n d f/p^{j-i}}  q_{p^i,p^m}^{ np^{m-j}}.
\end{multline*}
%\nals
If $v_{a/p^i}(f(z))>1$, then the coefficient of $q_{p^i,p^m}^1$ in the Fourier series expansion of $f(z)$ at $a/p^i$ is $0$, i.e., $a_1(p^i,p^m) r_{p^m} e^{- 2 \pi {\rm i}  n d f/p^{m-i}}=0$. Since $a_1(p^i,p^m)e^{- 2 \pi {\rm i}  n d f/p^{m-i}} \neq 0$, we must have $r_{p^m}=0$. Arguing similarly we obtain
\bals
r_{p^m}& =0 & \mbox{ if $i < m/2$,}\\
r_1& =0 & \mbox{ if $i > m/2$,}\\
a_1(p^{m/2},1) r_1 + a_1(p^{m/2},p^{m}) e^{-2 \pi {\rm i}  d f/p^{m/2}}r_{p^m}& =0 &  \mbox{ if $i = m/2$.}
\nals
Since, in notation of Lemma \ref{expansionlemma}, $c=p^{m/2}$, $t=p^m$, $ad-bc=1$, and $eh-fg=1$, we have $\gcd(df,p)=1$. Therefore when $p\neq 2$ and $m \neq 2$, $e^{-2 \pi {\rm i}  d f/p^{m/2}}$ is not a real number. Now since $a_1(p^{m/2},1)$, $ a_1(p^{m/2},p^{m}) \in \qqq$ we have $r_{1}=0$ and $r_{p^m}=0$. Therefore the statement follows.

%If $p=2$, $m=2$, we have
%\bals
%(2 z + d)^{-2} f(Mz) & = \ds \sum_{ 1 \geq j} \sum_{n \geq 0} a_n(2,2^j) r_{2^j} q_{2,4}^{ n 2^{j}} + \sum_{ 1 < j} \sum_{n\geq 0} a_n(2,2^j) r_{2^j} e^{- 2 \pi i n d f/2^{j-1}}  q_{2,4}^{ n 2^{2-j}}\\
%&= \ds \sum_{n \geq 0} a_n(2,1) r_{1} q_{2,4}^{ n} + \sum_{n \geq 0} a_n(2,2) r_{2} q_{2,4}^{ 2n} + \sum_{n\geq 0} a_n(2,4) r_{4} e^{- 2 \pi i n d f/2}  q_{2,4}^{ n}.
%\nals
%If $v_{a/2}>1$ then we have
%\bals
%a_0(2,1) r_1 + a_0(2,2) r_2 + a_0(2,4) r_4 & =0,\\
%a_1(2,1) r_1 + (-1)^{df} a_1(2,4) r_4 & =0.
%\nals
%Noting that $a_n(2,1)=4 \cdot a_n(2,4)$ for all $n \geq 1$, this implies 
%\bals
%r_1 + r_2 + r_4/4 & =0,\\
%r_1 + (-1)^{df} r_4/4 & =0,\\
%r_1+r_2/2+r_4/4&=0.
%\nals
%Solving this we have $r_1=0$, $r_2=0$ and $r_4=0$. This completes the proof. (***If $k>2$ third equation does not hold, we need to figure something else.)

%\bals
%f(z)& =\sum_{n \geq 1} r_1 \sigma_{1}(n)q^n + \sum_{n \geq 1} r_2 \sigma_{1}(n)q^{2n} + \sum_{n \geq 1} r_4 \sigma_{1}(n)q^{4n}\\
%& =\sum_{n \geq 0} r_1 \sigma_{1}(2n+1)q^{2n+1} + \sum_{n \geq 0} (r_1 \sigma_1(4n+2) + r_2 \sigma_{1}(2n+1) ) q^{4n+2} + \sum_{n \geq 1} (r_1 \sigma_1(4n) + r_2 \sigma_1(2n) + r_4 \sigma_{1}(n) ) q^{4n}\\
%\nals

\end{proof}

%Unnecessary!
%\begin{lemma} \label{lem3_2}
%An Eisenstein series $f(z) \in E_2(p^m)$ is {$\mathcal{E}_{p^m}$-{primitive}} if and only if $v_{a/p^i}\leq 1$ for all cusps $a/p^i$ where $0 \leq i \leq m$ and $\gcd(a,p^i)=1$.
%\end{lemma}

%\begin{proof}
%By Lemma \ref{exp10} if $v_{a/p^i}>1$ for some $a/p^i$ then we have either $r_1=0$ or $r_p^{m}=0$. If $r_1=0$ then $f(z) \in \mathcal{O}(p,p^m)$; if $r_{p_m}=0$ then $f(z) \in E_{2}(p^{m-1})$, i.e., $f(z)$ is not {$\mathcal{E}_{p^m}$-{primitive}}.

%If $f(z)$ is {$\mathcal{E}_{p^m}$-{primitive}} we must have $r_1 \neq 0$ and $r_{p^m} \neq 0$. 
%\end{proof}

\begin{proof}[Proof of Theorem \ref{maingen}] Assume $p^m \neq 4$. Let $f(z) \in \mathcal{P}_k(p^m) $. Assume, for sake of contradiction, that $v_{a/p^i}(f(z)) > 1$ for some cusp $a/p^i$ where $0 \leq i \leq m$ and $\gcd(a,p^i)=1$. Then by Lemma \ref{exp10} we have either $r_1=0$ or $r_{p^{m}}=0$. If $r_1=0$, then there exists a $g(z) \in \mathcal{E}_k(p^{m-1})$ such that $f(z)=g(pz)$, that is, $f(z) \in \mathcal{O}_k(p^m)$. This contradicts to $f(z)$ being in $\mathcal{P}_k(p^m)$. If $r_{p^m}=0$, then $f(z) \in \mathcal{E}_k(p^{m-1})$ again a contradiction with $f(z)$ being in $\mathcal{P}_k(p^m)$. Therefore we must have $v_{a/p^i}(f(z)) \leq 1$ for all cusps $a/p^i$ where $0 \leq i \leq m$ and $\gcd(a,p^i)=1$. Since $f$ is not a cusp form, for at least one $r \in R(p^m)$ we have $v_r(f(z))=0$. Then the theorem follows from observing
\bals
\# R(p^m) = p^{[(m-1)/2]} ( p^{(m-1)-2 [(m-1)/2]} +1), %ccc check this
\nals
see \cite[Corollary 6.3.24.(b)]{CS17}, together with the fact that all $r \in R(p^m)$ can be chosen to be in the form $\ds {a}/{p^i}$.

Now we prove the case $p=2$ and $m=2$. In this case let us fix \bals
R(4)=\left\{ \frac{1}{1}, \frac{1}{2}, \frac{1}{4} \right\}.
\nals
By arguments similar to the proof of Lemma \ref{exp10} if $v_{1/1}(f(z))>1$ or $v_{1/4}(f(z))>1$, then $r_1=0$ or $r_4=0$, that is, $f(z)$ cannot be in $\mathcal{P}_k(4)$. Therefore, we must have $v_{1/1}(f(z)) \leq 1$ and $v_{1/4}(f(z)) \leq 1$. On the other hand if $v_{1/2}(f(z))>2$, then from the expansion at $1/2$, formula \eqref{eqexpf} for $p=2$, $m=2$, $i=1$, we have
\begin{multline*}
(2 z + d)^{-k} f(Mz) = \ds \sum_{n \geq 0} a_n(2,1) r_{1} q_{2,4}^{ n} \\
+ \sum_{n \geq 0} a_n(2,2) r_{2} q_{2,4}^{ 2n} + \sum_{n\geq 0} a_n(2,4) r_{4} e^{- 2 \pi {\rm i}  n d f/2}  q_{2,4}^{ n}.
%&= \ds \sum_{n \geq 0} a_{2n+1}(2,1) ( r_{1} +  (-1)^{df} r_{4}/4) q_{2,4}^{ 2n+1}  + \sum_{n \geq 0} (a_{2n}(2,1) r_{1} + a_n(2,2) r_{2}+ a_{2n}(2,4) r_{4}) q_{2,4}^{ 2n}\\
%&= \ds \sum_{n \geq 0} (a_{2n}(2,1) r_{1} + a_n(2,2) r_{2}+ a_{2n}(2,4) r_{4}) q_{2,4}^{ 2n},
\end{multline*}
Observing that $df\equiv 1 \pmod{2}$ %ccc 
yields
\bals
a_0(2,1) r_1 + a_0(2,2) r_2 + a_0(2,4) r_4 &=0, \\ 
a_1(2,1) r_1 - a_1(2,4) r_4 &=0,\\ 
a_2(2,1) r_1 + a_1(2,2) r_2 + a_2(2,4) r_4 &=0.
\nals
Putting the values of $a_n$'s from \eqref{anval} in the the above equations and simplifying them we have
\bals
r_1 + r_2 + r_4/2^k &=0,\\ 
r_1 - r_4/2^k &=0,\\ 
(1+2^{k-1}) r_1 + r_2 + (1+2^{k-1}) r_4/2^k &=0.
\nals
Thus $r_1,r_2,r_4=0$. Therefore, if $v_{1/2}(f(z))>2$, then $f(z)=0$. 

Hence, if $f(z) \in \mathcal{P}_k(4)$, then $v_{1/1}(f(z)) \leq 1$, $v_{1/4}(f(z)) \leq 1$ and $v_{1/2}(f(z)) \leq 2$. Also since $f(z)$ is not a cusp form $v_r(f(z))$ has to be $0$ for at least one $r \in R(4)$. Thus, we have $\sum_{r \in R(4)} v_r(f(z)) < 4$. 
\end{proof}

\begin{proof}[Proof of Theorem \ref{main}]

Let $f \in \mathcal{P}_k(p^m)$ be an eta quotient. Then we have
\bals
\sum_{r \in R(p^m)} v_r(f)= \begin{cases}
\displaystyle \frac{k}{12} & \mbox{ if $m = 0$,}\\
\displaystyle \frac{k}{12} ( p^{m} + p^{m-1}) & \mbox{ if $m \neq 0$,}
\end{cases}
\nals 
see \cite[Lemma 2.1]{choi}. 

We observe that $\sum_{r \in R(p^m)} v_r(f)$ is greater than or equal to the upper bound given by Theorem \ref{maingen} unless $(k,p^m)=(2,1)$, $(4,1)$, $(6,1)$, $(8,1)$, $(10,1)$, $(2,2)$,  $(2,4)$, $(2,8)$, $(2,16)$, $(4,2)$, $(4,4)$, $(6,2)$, $(6,4)$, $(2,3)$, $(2,9)$, $(4,3)$, $(2,5)$, $(2,25)$, $(2,7)$. This means unless $(k,p^m)$ is one of the pairs above, $f \in \mathcal{P}_k(p^m)$ will have at least one zero in the upper half-plane, that is, $f$ cannot be an eta quotient. We run the algorithms given in \cite{AT19} and see that there are no eta quotients in $\mathcal{P}_k(p^m)$ when $(k,p^m)=(2,1)$, $(4,1)$, $(6,1)$, $(8,1)$, $(10,1)$, $(2,2)$, $(6,2)$, $(6,4)$, $(2,3)$, $(4,3)$, $(2,5)$, $(2,25)$, $(2,7)$. %ccc recheck (2,25). Done
%When $m = 0$, by Theorem \ref{maingen} the upper bound is $1$, and hence if $ {k} \geq {12}$  then by the above arguments there are no eta quotients in $\mathcal{P}_k(1)$. On the other hand if $k<12$, then there are no eta quotients in $M_k(\Gamma_0(1))$. This settles the proof in the case $m=0$.
\end{proof}

%\begin{proof}[Proof of Theorem \ref{main}]

%Let $f \in \mathcal{P}_k(p^m)$ be an eta quotient. Then $\sum_{r \in R(p^m)} v_r(f)= \frac{k}{12} ( p^{m} + p^{m-1})$, see \cite[Lemma 2.1]{choi}. By a straightforward calculation we see that $\frac{k}{12} ( p^{m} + p^{m-1})$ is greater than or equal to the upper bound given by Theorem \ref{maingen} unless $(k,p^m)=(2,2)$,  $(2,4)$, $(2,8)$, $(2,16)$, $(4,2)$, $(4,4)$, $(6,2)$, $(6,4)$, $(2,3)$, $(2,9)$, $(4,3)$, $(2,5)$, $(2,25)$, $(2,7)$. This means unless $(k,p^m)$ is one of the pairs above, $f \in \mathcal{P}_k(p^m)$ will have at least one zero in the upper half-plane, that is, $f$ cannot be an eta quotient. We run the algorithms given in \cite{AT19} and see that there are no eta quotients in $\mathcal{P}_k(p^m)$ when $(k,p^m)=(2,2)$, $(6,2)$, $(6,4)$, $(2,3)$, $(4,3)$, $(2,5)$, $(2,25)$, $(2,7)$. %ccc recheck (2,25).
%\end{proof}
\begin{proof}[Proof of Corollary  \ref{etaqlist}]
Notice that for any $f\in \mathcal{E}_k(p^m)$ there is a $g\in \mathcal{P}_k(p^{m^\prime})$ such that $0\leq m^\prime \leq m$ and $f$ is a trivial extension of $g$. By Theorem \ref{main}, we have $$(k, p^{m^\prime})\in \{(2, 4), (2, 8), (2, 9), (2, 16), (4, 2), (4, 4)\}.$$ Now we can derive the finite set of all the eta quotients of weight $k$ and level $p^{m^\prime}$ in $\mathcal{P}_k(p^{m^\prime})$ by employing Algorithm 4.2 in \cite{AT19}.
\end{proof}

\begin{proof}[Proof of Corollary  \ref{derlist}] By Lemma 2.1 of \cite{AT19} the desired set can be obtained by finding the antiderivatives of the eta quotients given in part (1) of Corollary \ref{etaqlist}. Now Algorithm 4.3 of \cite{AT19} derives the required antiderivatives.
\end{proof}

\section{Second order derivatives} \label{sec4}
In this section we prove Theorem \ref{secordprop}.  We start by determining the weight of an eta quotient whose second derivative is also an eta quotient. 

\begin{proposition}
\label{2nd-der}
If $f$ is an eta quotient of weight $k$ for which $D^2(f)/ f$ is an eta quotient of weight $\ell$, then $k=-1$ and $\ell=4$.
\end{proposition}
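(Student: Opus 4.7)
The plan is to work with quasi-modular forms and the Serre derivative, as follows. First, I would record the logarithmic derivative of an eta quotient. Using $D\log \eta(z) = E_2(z)/24$ (in the standard normalization $E_2=1-24\sum \sigma_1(n)q^n$) on each factor of $f=\prod_{t} \eta(tz)^{r_t}$ gives
\[
\phi := \frac{Df}{f} \;=\; \frac{1}{24}\sum_{t} t r_t E_2(tz).
\]
Because $E_2(tz)=E_2/t+(\text{modular weight }2)$, this can be rewritten as $\phi=\frac{k}{12}E_2+\psi$, where $k=\tfrac12\sum r_t$ is the weight and $\psi$ is a (meromorphic) modular form of weight $2$ on the level of $f$; equivalently, $\psi = \vartheta_k f/f$ for the Serre derivative $\vartheta_k f = Df - \tfrac{k}{12}E_2 f$, which is a genuine modular form of weight $k+2$.

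Second, I would compute $D^2 f/f = D\phi + \phi^2$. Using the Ramanujan identity $DE_2 = (E_2^2-E_4)/12$ for $D\phi$, and $D\psi=\vartheta\psi+\tfrac{1}{6}E_2\psi$ for the weight-$2$ form $\psi$, expansion and collection of terms yield
\[
\frac{D^2 f}{f} \;=\; \frac{k(k+1)}{144}\,E_2^2 \;+\; \frac{k+1}{6}\,E_2\,\psi \;+\; \Bigl(\vartheta\psi + \psi^2 - \frac{k}{144}E_4\Bigr),
\]
where the expression inside the parentheses is a meromorphic modular form of weight $4$.

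Third, the hypothesis that $D^2 f/f$ is an eta quotient means it is a modular form of some weight $\ell$. On the other hand, the displayed expression presents it as a quasi-modular form of weight $4$ and depth $\le 2$. Since the ring of meromorphic quasi-modular forms on $\Gamma_0(N)$ is the polynomial ring $M^{\mathrm{mero}}_*(\Gamma_0(N))[E_2]$ (so the depth decomposition is unique, as $E_2$ is algebraically independent over the ring of modular forms), the modularity of $D^2 f/f$ forces the coefficients of $E_2^2$ and of $E_2$ to vanish:
\[
\frac{k(k+1)}{144}=0 \qquad\text{and}\qquad \frac{k+1}{6}\,\psi = 0.
\]
The first equation gives $k\in\{0,-1\}$. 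If $k=-1$, both conditions hold, the remaining parenthetical term has weight $4$, and so $\ell=4$. If instead $k=0$, the second equation forces $\psi=0$; but then $Df=\vartheta_0 f=0$, so $f$ is a $q$-independent constant. By uniqueness of the $q$-expansion the only constant eta quotient is $f=1$, in which case $D^2 f=0$ is not an eta quotient — a contradiction. Hence $k=-1$ and $\ell=4$.

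The main obstacle is the justification that the $E_2^2$ and $E_2$ coefficients must vanish: one must invoke the uniqueness of the depth decomposition in the polynomial ring $M^{\mathrm{mero}}_*(\Gamma_0(N))[E_2]$ (with the appropriate character and, if need be, half-integral weight), since eta quotients are typically only meromorphic modular forms with a character. This is standard, but needs to be stated carefully. The remaining computations, once organized via the Serre derivative, are routine.
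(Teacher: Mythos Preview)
Your argument is correct and the computation via the Serre derivative is clean. The route, however, differs from the paper's. The paper differentiates the transformation rule $f(Mz)=\psi_M(cz+d)^kf(z)$ twice to obtain
\[
(cz+d)^{-4}\frac{D^2 f(Mz)}{f(Mz)}=\frac{D^2 f(z)}{f(z)}+\frac{k+1}{\pi i}\frac{Df(z)}{f(z)}\cdot\frac{c}{cz+d}-\frac{k(k+1)}{4\pi^2}\Bigl(\frac{c}{cz+d}\Bigr)^{2},
\]
then equates this with $\chi_M(cz+d)^{\ell-4}\,D^2 f/f$ and, fixing $z$ and letting $d$ run through infinitely many values with a fixed multiplier $\chi_0$, obtains a polynomial equation in $X=Nz+d$ that forces $k=-1$, $\ell=4$. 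Your quasi-modular decomposition
\[
\frac{D^2 f}{f}=\frac{k(k+1)}{144}E_2^{2}+\frac{k+1}{6}E_2\,\psi+\Bigl(\vartheta\psi+\psi^{2}-\tfrac{k}{144}E_4\Bigr)
\]
is in fact an exact repackaging of that transformation formula (substituting the $E_2$ transformation law into your display recovers the paper's identity term for term), and your appeal to uniqueness of the depth filtration is the abstract version of the paper's ``too many roots of a polynomial'' step.

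What each approach buys: yours is shorter and more conceptual once the structure theorem $\widetilde{M}^{\mathrm{mero}}_*(\Gamma_0(N))\cong M^{\mathrm{mero}}_*(\Gamma_0(N))[E_2]$ is available; it also makes transparent why the answer is $k=-1$ (the coefficient $k+1$ comes from the Serre derivative). The paper's approach is entirely self-contained and avoids having to state and justify the structure theorem in the meromorphic-at-cusps, multiplier-system setting---exactly the point you correctly flag as the main obstacle. Since in this situation $\psi=\vartheta_k f/f$ and the weight-$4$ piece are holomorphic on $\mathbb{H}$ (as $f$ is nonvanishing there) and transform with trivial character, the required uniqueness does hold; but a referee would likely want a precise reference or the one-paragraph transformation-law proof, which at that point is essentially the paper's argument.
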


The following lemma is needed for the proof of this proposition.

\begin{lemma}
Let $f$ be a nonzero holomorphic function defined on the upper half-plane. Suppose that for an $M= \begin{pmatrix} a & b \\ c & d \end{pmatrix} \in \SL_2(\mathbb{Z})$ we have
\begin{equation}
\label{transformation}
f(Mz)=\psi_{M} (cz+d)^k f(z),
\end{equation}
where $k$ is an integer and $\psi_{M}$ is a root of unity depending on $M$. Then
\begin{equation}
\label{second-der}
(cz+d)^{-4}\frac{D^2(f(Mz))}{f(Mz)}=\frac{D^2(f(z))}{f(z)}+ \frac{k+1}{\pi {\rm i}} \frac{D(f(z))}{f(z)} \left(\frac{c}{cz+d} \right)-\frac{k(k+1)}{4\pi^2} \left( \frac{c}{cz+d} \right)^2.
\end{equation}

%%$M = \begin{pmatrix} a & b \\ c & d \end{pmatrix} \in \SL_2(\zz)$\end{lemma}
\end{lemma}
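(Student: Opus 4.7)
The plan is to differentiate the transformation law \eqref{transformation} twice with respect to $z$ and then rearrange. Throughout I read $D^2(f(Mz))$ as $(D^2 f)(Mz)$, i.e.\ the function $D^2 f$ evaluated at the point $Mz$; the $(cz+d)^{-4}$ prefactor on the left of \eqref{second-der} forces this interpretation, since the alternative reading (where $D^2$ acts on the composite function $z\mapsto f(Mz)$) produces no overall $(cz+d)^{-4}$ and the wrong coefficients $k,k(k-1)$ in place of $k+1,k(k+1)$.

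First, I apply $D=\tfrac{1}{2\pi{\rm i}}\tfrac{d}{dz}$ to both sides of \eqref{transformation}. The chain-rule identity $D[h(Mz)]=(Dh)(Mz)/(cz+d)^{2}$, valid for any differentiable $h$, turns the left side into $(Df)(Mz)/(cz+d)^{2}$, while the product rule on the right side produces $\psi_M\!\left[\tfrac{kc}{2\pi{\rm i}}(cz+d)^{k-1}f(z)+(cz+d)^{k}Df(z)\right]$. Multiplying through by $(cz+d)^{2}$ yields
\begin{equation*}
(Df)(Mz)=\psi_{M}\!\left[\frac{kc}{2\pi{\rm i}}(cz+d)^{k+1}f(z)+(cz+d)^{k+2}Df(z)\right].
\end{equation*}

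Applying $D$ a second time to this identity, the left side becomes $(D^{2}f)(Mz)/(cz+d)^{2}$, and the product rule on the right produces three terms: one in $f$ with coefficient $\tfrac{k(k+1)c^{2}}{(2\pi{\rm i})^{2}}(cz+d)^{k}$, one in $Df$ with coefficient $\tfrac{2(k+1)c}{2\pi{\rm i}}(cz+d)^{k+1}$ (assembled from two intermediate contributions), and one in $D^{2}f$ with coefficient $(cz+d)^{k+2}$. Multiplying by $(cz+d)^{2}$ gives $(D^{2}f)(Mz)$ explicitly. Dividing by $f(Mz)=\psi_{M}(cz+d)^{k}f(z)$ cancels $\psi_{M}$ and reduces each $(cz+d)^{\alpha}$ to $(cz+d)^{\alpha-k}$, and multiplying by $(cz+d)^{-4}$ brings the three terms to powers $(cz+d)^{-2},(cz+d)^{-1},1$, exactly matching the weights on the right of \eqref{second-der}. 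Rewriting $1/(2\pi{\rm i})^{2}=-1/(4\pi^{2})$ and $2/(2\pi{\rm i})=1/(\pi{\rm i})$ then produces the stated constants.

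The argument is a routine two-step chain-rule calculation, so there is no conceptual obstacle. The one place that requires care is the second differentiation: two mixed contributions of the form $(cz+d)^{k+1}Df$ appear — one from differentiating $f(z)$ in the summand $\tfrac{kc}{2\pi{\rm i}}(cz+d)^{k+1}f$, the other from differentiating $(cz+d)^{k+2}$ in the summand $(cz+d)^{k+2}Df$ — and they must be combined into a single term with coefficient $\tfrac{2(k+1)c}{2\pi{\rm i}}(cz+d)^{k+1}$, which is what ultimately produces the $\tfrac{k+1}{\pi{\rm i}}$ coefficient in \eqref{second-der}.
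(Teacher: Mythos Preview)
Your proof is correct and follows exactly the approach indicated in the paper, which simply says the identity ``is a consequence of differentiation of \eqref{transformation} twice and employing the transformation property \eqref{transformation}'' (with a reference to Royer for the general $m$-th derivative formula). Your interpretation of $D^2(f(Mz))$ as $(D^2 f)(Mz)$ is the intended one, and the detailed two-step chain-rule calculation you carry out is precisely what the paper leaves implicit.
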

\begin{proof}
This is a consequence of  differentiation of \eqref{transformation} twice and employing the transformation property \eqref{transformation}. (For a general formula for the $m$-th derivative of a modular form see \cite[Proposition 3.1]{R12}.) 
\end{proof}

\begin{proof}[Proof of Proposition \ref{2nd-der}]
Since  $f$ is an eta quotient of weight $k$, then \eqref{transformation} holds for any $M\in \Gamma_0(R)$ for a suitable non-negative integer $R$ and  for corresponding $24$-th roots of unity $\psi_M$'s. Thus \eqref{second-der} holds for such $M$'s. Now assume that $D^2(f)/f$ is also an eta quotient of weight $\ell$. Suppose that this eta quotient has level $N$ (a multiple of $R$) and thus satisfies 
\begin{equation}
\label{transformation-2}
\frac{D^2(f(Mz))}{f(Mz)}=\chi_M (cz+d)^\ell \frac{D^2(f(z))}{f(z)}
\end{equation}
for some $24$-th root of unity $\chi_M$, depending on $M$,  and for all matrices $M= \begin{pmatrix} a & b \\ c & d \end{pmatrix} \in \Gamma_0(N)$. Now, for a fixed $z$ in the upper half-plane,  applying \eqref{transformation-2} in \eqref{second-der} and rearranging the terms yields
\begin{equation}
\label{main44}
\left(\chi_M (cz+d)^\ell-(cz+d)^4\right) \frac{D^2(f(z))}{f(z)}=(k+1) \left(kc^2 \alpha (cz+d)^2+c\beta(cz+d)^3  \right)
\end{equation}
for some fixed constants $\alpha, \beta \in \mathbb{C}$. (Note that since $z$ is fixed, the value of $D(f(z))/f(z)$ is absorbed in $\beta$.) Next assume that $\ell\geq 0$ and $\ell\neq 4$. Observe that the set $S$ of matrices  $M= \begin{pmatrix} * & * \\ N & d \end{pmatrix} \in \Gamma_0(N)$ for which $\chi_0 (Nz+d)^\ell-(Nz+d)^4\neq 0$, for one of the $24$-th roots of unity $\chi_0$, is infinite. Assuming that $k\neq -1$, the equation \eqref{main44}, for $c=N$ and $M\in S$, can be re-written as 
\begin{equation}
\label{main-2}
 \frac{1}{k+1}\frac{D^2(f(z))}{f(z)}=\frac{kN^2 \alpha (Nz+d)^2+N\beta(Nz+d)^3 }{\chi_0 (Nz+d)^\ell-(Nz+d)^4}.
\end{equation}
Now for a fixed $z$ in the upper half-plane, the right-hand side of \eqref{main-2} is equal to a fixed non-zero complex number $\gamma$ for infinitely many values of $d$. This is a contradiction as the non-trivial polynomial equation 
$$(kN^2\alpha) X^2+(N\beta) X^3-(\gamma\chi_0) X^\ell+\gamma X^4=0$$
has finitely many solutions. The other cases can be analyzed in a similar fashion to conclude that if both $f$ and $D^2(f)/f$ are eta quotients, then 
$k=-1$, $\ell=4$, and $\chi_M=1$. 
\end{proof}

We continue
%start 
with listing some identities that will be useful in the proof:
\bal
E_2(z)^2&= \frac{5}{12} E_4(z) -\frac{1}{2} D(E_2(z)), \label{conv1}
\nal
\bal
E_2(z)E_2(2z) = \frac{1}{12}E_4(z) + \frac{1}{3} E_4(2z) - \frac{1}{8} D(E_2(z)) - \frac{1}{4} D(E_2(2z)), \label{conv2}
\nal
and
\bal
E_2(z)E_2(4z) = \frac{1}{48}E_4(z) + \frac{1}{16} E_4(2z)  + \frac{1}{3} E_4(4z) - \frac{1}{16} D( E_2(z)) - \frac{1}{4} D( E_2(4z)), \label{conv3}
\nal
first of which is due to Besge, Glaisher and Ramanujan independently and for the latter two see \cite[Theorems 2 and 4]{williams}.
% {\color{red} The arguments we use to prove Theorem \ref{secordprop} will not hold for eta quotients in most of the levels other than $4$ because formulas analogous to \eqref{conv1}--\eqref{conv3} in other levels mostly involve cusp forms of weight $4$. So the second derivative of an eta quotient in general may not be in $\mathcal{E}_4(N)$. (Amir's note: I moved these sentences to the introduction.)}
Additionally we note that using \eqref{conv1} and replacing $z$ by $2z$ we obtain
\bal
E_2(2z)^2= \frac{5}{12} E_4(2z) -\frac{1}{4} D( E_2(2z)),
\nal
using \eqref{conv1} and replacing $z$ by $4z$ we obtain
\bal
E_2(4z)^2= \frac{5}{12} E_4(4z) -\frac{1}{8} D( E_2(4z)),
\nal
and using \eqref{conv2} and replacing $z$ by $2z$ we obtain
\bal
E_2(2z)E_2(4z) = \frac{1}{12}E_4(2z) + \frac{1}{3} E_4(4z) - \frac{1}{16} D( E_2(2z)) - \frac{1}{8} D( E_2(4z)). \label{convlast}
\nal

Now we prove Theorem \ref{secordprop}.

\begin{proof}[Proof of Theorem \ref{secordprop}]
If $f(z) = \eta^{r_1}(z) \eta^{r_2}(2z)\eta^{r_4}(4z)$, then by taking logarithmic derivative two times and employing the identity $$D (\log(\eta(z)))=-E_2(z)$$
we obtain
%\bals
\begin{multline*}
\frac{ D^2 (f(z))}{f(z)} 
%& 
=D (- r_1 E_2(z) - 2 r_2 E_2(2z) - 4 r_4 E_2(4z)) \\+  (- r_1 E_2(z) - 2 r_2 E_2(2z) - 4 r_4 E_2(4z))^2.
\end{multline*}
%\nals
%!TEX encoding = UTF-8 Unicode{\color{red} From this and the transformation properties of eta quotients it is not hard to see that $\frac{ D\left(D( f(z))\right)}{f(z)}$ may be an eta quotient only if $r_1+r_2+r_4=-2$.} 
Next we use \eqref{conv1}--\eqref{convlast} and obtain
\bals
\frac{ D^2( f(z))}{f(z)} & = E_4(z) \left( \frac{5}{12} r_1^2 + \frac{1}{3} r_1r_2 \right) + E_4(2z) \left( \frac{5}{3} r_2^2 + \frac{4}{3} r_1 r_2 + \frac{1}{2} r_1 r_4  + \frac{4}{3} r_2 r_4  \right)\\
& \quad  + E_4(4z) \left(\frac{20}{3} r_4^2  + \frac{8}{3} r_1 r_4  + \frac{16}{3} r_2 r_4  \right)\\
& \quad + D(E_2(z)) \left( -r_1 - \frac{1}{2}r_1 (r_1 + r_2 + r_4)  \right)\\
& \quad + D(E_2(2z)) \left( -2r_2 - r_2(r_1 + r_2 +  r_4) \right)\\
& \quad + D(E_2(4z)) \left( -4r_4 - 2 r_4 (r_1 + r_2 + r_4)  \right).
\nals
Noting that, by Proposition \ref{2nd-der},  $r_1 + r_2 + r_4=-2$ must hold, we have
\bals
\frac{ D^2( f(z))}{f(z)} & = E_4(z) \left( \frac{5}{12} r_1^2 + \frac{1}{3} r_1r_2 \right) \\
& \quad + E_4(2z) \left( \frac{5}{3} r_2^2 + \frac{4}{3} r_1 r_2 + \frac{1}{2} r_1 r_4  + \frac{4}{3} r_2 r_4  \right)\\
& \quad  + E_4(4z) \left(\frac{20}{3} r_4^2  + \frac{8}{3} r_1 r_4  + \frac{16}{3} r_2 r_4  \right).
\nals
Therefore $\frac{ D^2( f(z))}{f(z)} \in \mathcal{E}_4(4)$. The result follows by investigating all the eta quotients in $\mathcal{E}_4(4)$ that are found by using the algorithms given in \cite{AT19}. %ccc
\end{proof}

\section*{Acknowledgement}

The authors would like to thank the referee for valuable comments and suggestions.

\begin{rezabib}

%}

\bib{AT19}{article}{
   author={Aygin, Zafer Selcuk},
   author={Toh, Pee Choon},
   title={When is the derivative of an eta quotient another eta quotient?},
   journal={J. Math. Anal. Appl.},
   volume={480},
   date={2019},
   number={1},
   pages={123366, 22},
   issn={0022-247X},
   review={\MR{3994912}},
   doi={10.1016/j.jmaa.2019.07.056},
}	

\bib{choi}{article}{
   author={Choi, Dohoon},
   author={Kim, Byungchan},
   author={Lim, Subong},
   title={Pairs of eta-quotients with dual weights and their applications},
   journal={Adv. Math.},
   volume={355},
   date={2019},
   pages={106779, 51},
   issn={0001-8708},
   review={\MR{3996729}},
   doi={10.1016/j.aim.2019.106779},
}

\bib{CS17}{book}{
   author={Cohen, Henri},
   author={Str\"{o}mberg, Fredrik},
   title={Modular forms, a classical approach},
   series={Graduate Studies in Mathematics},
   volume={179},
  % note={A classical approach},
   publisher={American Mathematical Society, Providence, RI},
   date={2017},
   pages={xii+700},
   isbn={978-0-8218-4947-7},
   review={\MR{3675870}},
   doi={10.1090/gsm/179},
}

\bib{williams}{article}{
   author={Huard, James G.},
   author={Ou, Zhiming M.},
   author={Spearman, Blair K.},
   author={Williams, Kenneth S.},
   title={Elementary evaluation of certain convolution sums involving
   divisor functions},
   conference={
      title={Number theory for the millennium, II},
      address={Urbana, IL},
      date={2000},
   },
   book={
      publisher={A K Peters, Natick, MA},
   },
   date={2002},
   pages={229--274},
   review={\MR{1956253}},
}

\bib{jacobi}{book}{
   author={Jacobi, C. G. J.},
 %  author={Str\"{o}mberg, Fredrik},
   title={Fundamenta Nova Theoriae Functionum Ellipticarum},
   series={Reprinted in Gesammelte Werke},
   volume={I},
  % note={A classical approach},
   publisher={Chelsea, New York},
   date={1969},
  % pages={pp. 49--239},
  % isbn={978-0-8218-4947-7},
   %review={\MR{3675870}},
   %doi={10.1090/gsm/179},
}
\bib{kohler}{book}{
   author={K\"{o}hler, G\"{u}nter},
   title={Eta products and theta series identities},
   series={Springer Monographs in Mathematics},
   publisher={Springer, Heidelberg},
   date={2011},
   pages={xxii+621},
   isbn={978-3-642-16151-3},
   review={\MR{2766155}},
   doi={10.1007/978-3-642-16152-0},
}

\bib{mordell}{article}{
   author={Mordell, L. J.},
   title={On the representations of numbers as a sum of $2r$ squares},
%   language={English, with English and French summaries},
   journal={Quart. J. Pure Appl. Math.},
   volume={48},
   date={1917},
  % number={2},
   pages={93--104},
  % issn={1259-1734},
  % review={\MR{3025137}},
}

%\bib{R07}{article}{
%   author={Royer, Emmanuel},
%   title={Evaluating convolution sums of the divisor function by
%   quasimodular forms},
%   journal={Int. J. Number Theory},
%   volume={3},
%   date={2007},
%   number={2},
%   pages={231--261},
%   issn={1793-0421},
%   review={\MR{2333619}},
%   doi={10.1142/S1793042107000924},
%}

\bib{ramanujan}{article}{
   author={Ramanujan, S.},
   title={On certain arithmetical functions [Trans. Cambridge Philos. Soc.
   {\bf 22} (1916), no. 9, 159--184]},
   conference={
      title={Collected papers of Srinivasa Ramanujan},
   },
   book={
      publisher={AMS Chelsea Publ., Providence, RI},
   },
   date={2000},
   pages={136--162},
   review={\MR{2280861}},
   doi={10.1016/s0164-1212(00)00033-9},
}

\bib{R12}{article}{
   author={Royer, Emmanuel},
   title={Quasimodular forms: an introduction},
   language={English, with English and French summaries},
   journal={Ann. Math. Blaise Pascal},
   volume={19},
   date={2012},
   number={2},
   pages={297--306},
   issn={1259-1734},
   review={\MR{3025137}},
}

\bib{stein}{book}{
   author={Stein, William},
   title={Modular forms, a computational approach},
   series={Graduate Studies in Mathematics},
   volume={79},
   note={With an appendix by Paul E. Gunnells},
   publisher={American Mathematical Society, Providence, RI},
   date={2007},
   pages={xvi+268},
   isbn={978-0-8218-3960-7},
   isbn={0-8218-3960-8},
   review={\MR{2289048}},
   doi={10.1090/gsm/079},
}

\bib{williams-2}{article}{
   author={Williams, Kenneth S.},
   title={Fourier series of a class of eta quotients},
   journal={Int. J. Number Theory},
   volume={8},
   date={2012},
   number={4},
   pages={993--1004},
   issn={1793-0421},
   review={\MR{2926557}},
   doi={10.1142/S1793042112500595},
}

\end{rezabib}

\end{document}